\documentclass[11pt]{siamltex}
\usepackage{amsmath}
\usepackage{graphicx}
\usepackage{bm}
\usepackage{amssymb,version}
\usepackage{cases}
\usepackage{microtype}
\usepackage{color}
\usepackage{enumerate}
\usepackage{xcolor} 
\usepackage{amssymb}
\newtheorem{rem}{Remark}[section]

\allowdisplaybreaks
\begin{document}
\title{A unified framework of fully decoupled, bound-preserving and energy-dissipative schemes for two-phase flow in porous media
\thanks{The work of X. Li was supported in part by the National Natural Science Foundation of China (Grant Nos. 12271302, 12131014) and Shandong Provincial Natural Science Foundation for Outstanding Youth Scholar (Grant No. ZR2024JQ030). The work of C. Wang was supported in part by NSF grants DMS-2309548.}}

	\author{Xiaoli Li\thanks{School of Mathematics, Shandong University, Jinan, Shandong, 250100, P.R. China. Email: xiaolimath@sdu.edu.cn}.	
		\and Cheng Wang\thanks{Mathematics Department, University of Massachusetts, North Dartmouth, MA 02747 USA. Email: cwang1@umassd.edu}.
		\and Yujing Yan\thanks{Corresponding author. School of Mathematics, Shandong University, Jinan, Shandong, 250100, P.R. China. Email: yyjsust@163.com}.
		\and Nan Zheng\thanks{Department of Applied Mathematics, The Hong Kong Polytechnic University, Hung Hom, Kowloon, Hong Kong. Email: nanzheng@polyu.edu.hk}.
	}
	\maketitle
	
	\begin{abstract}
Developing high-order numerical schemes for two-phase flow in porous media that preserve key physical properties remains a significant challenge in numerical analysis. In this article, we propose a general framework to construct fully discrete first- and second-order numerical schemes for thermodynamically consistent model of incompressible and immiscible two-phase flow in porous media. The proposed schemes are rigorously proved to ensure five fundamental properties: (i) unique solvability; (ii) full decoupling; (iii) bound preservation for both phases; (iv) original energy dissipation; (v) local mass conservation for both phases. The key to ensure the unique solvability lies in guaranteeing the strict convexity of the discrete energy functionals associated with the constructed schemes. Departing from the coupled solution approach for the pressure and saturation variables, the proposed approach breaks traditional paradigm by subtracting the two-phase mass conservation equations to derive a fully decoupled system. In addition, the bound-preserving property for both phases is established by leveraging the singular nature of the logarithmic term around the limit values of $0$ and $1$. A rigorous error estimate for the first-order scheme, in the $\ell^{\infty}(0,T; H_h^{-1} (\Omega)) \cap \ell^{2}(0,T; \ell^2(\Omega))$ norm for the saturations of two phases, is established. Finally, various numerical examples are presented to verify the theoretical results and demonstrate the efficiency of the proposed schemes.
	\end{abstract}

	\begin{keywords}
	Two-phase flow, bound preservation, original energy stability, error estimate
	\end{keywords}
	
	\begin{AMS}
		65M06, 65M12, 76S05
	\end{AMS}
	\pagestyle{myheadings}
	\thispagestyle{plain}

\section{Introduction}
Numerical modeling of two-phase flow is of great interest in petroleum reservoir engineering \cite{chavent1986mathematical,MURAD2013192,leng2025hdg} and groundwater hydrology \cite{CHO2016184,kim1997numerical}. \textcolor{black}{A key challenge in the numerical design and simulation lies in the preservation of the intrinsic mathematical structures and physical laws of the underlying models.} Although  classical models have been extensively studied \cite{CHEN20141,chen2006computational,GALUSINSKI20081741,HOTEIT200856}, \textcolor{black}{they often exhibit a notable limitation:} the energy formulations typically account only for the ideal contributions of wetting-phase saturation. This omission of non-ideal interfacial effects constitutes a fundamental limitation in achieving thermodynamic consistency. 
It is worth mentioning that a thermodynamically consistent model for two-phase incompressible and immiscible flow in porous media has been developed in \cite{gao2020thermodynamically}, in which the authors introduced a logarithmic free energy to characterize the capillarity effect. However, their work did not give the details on how to reformulate the free energy and the corresponding energy inequality. Recently, 
the model proposed in  \cite{kou2022energy} \textcolor{black}{was reformulated} as a chemical potential-driven formulation, and the free energy was rewritten as a function of the wetting-phase saturation only. \textcolor{black}{On the other hand}, it should be noticed that the transformed model may exhibit deviations from physical reality since the wetting-phase pressure does not exist globally. \textcolor{black}{To retain the full physical  accuracy,} in this work we concentrate on the original model of two-phase flow in porous media \cite{gao2020thermodynamically,kou2022energy}. The model is based on \textcolor{black}{the coupling of} the mass conservation equation of binary fluids, Darcy's law, \textcolor{black}{the chemical potentials derived from the free energy,}
and the saturation constraint, which are given as follows:
\begin{align}	
	&\phi\frac{\partial  S_{\alpha} }{\partial t}+\nabla\cdot \textbf{u}_{\alpha}= q_{ \alpha } ,\label{e_modela} \\
	&\textbf{u}_{\alpha}=-\lambda _{\alpha}\textbf{K}\nabla(p + \mu_{\alpha} ), \label{e_modelb} \\
	&\mu_{\alpha} = \sigma_{\alpha} {\rm ln} (S_{\alpha}) + \sigma_{wn}(1-S_{\alpha}),\label{e_modeld}\\
	&S_{w}+S_{n}=1, \label{e_modelc}	
\end{align}
where $\alpha=w, n$. Here $\phi$ is the porosity of a porous medium and $S_{\alpha}$ is the saturation of phase $\alpha$. The wetting and non-wetting phases are denoted by the subscript $w$ and $n$, respectively. In fact, the pressure $p$ is introduced as a Lagrange multiplier with regard to the saturation constraint, $\textbf{u}_{\alpha}$ is Darcy's velocity and $q_{\alpha}$ is the injection/production rate of phase $\alpha$, and $\textbf{K}$ is the absolute permeability. Without loss of generality, it is assumed that $\textbf{K}$ is a symmetric and positive tensor. For simplicity, we further assume that $\textbf{K} = K \textbf{I}$, where $\textbf{I}$ is the identity matrix and $K$ is a positive real number. The phase mobility $\lambda_\alpha$ is defined as 
\begin{equation*} 
	\lambda_{\alpha} (S_{\alpha})  = \frac{ k_{r\alpha} (S_{\alpha}) }{ \eta_{\alpha} } >0 , \quad \alpha=w,n, 
\end{equation*}
where $k_{r \alpha}$ and $\eta_{\alpha}$ are the relative permeability and viscosity of phase $\alpha$, respectively. Specifically, the relative permeabilities are given by $	k_{ra}(S_{\alpha})=S_{\alpha}^{m},~ \alpha=w,n$, where $m$ is a positive constant. 

The equation of chemical potential \eqref{e_modeld} comes from the derivatives of the free energy density function $F$, as given by 
\begin{align*} 
	&\mu_{w}(S_{w},S_{n})=\frac{\partial F}{\partial S_{w}} = \sigma_{w} {\rm ln} (S_{w}) + \sigma_{wn}S_{n}, \\
	&\mu_{n}(S_{w},S_{n})=\frac{\partial F}{\partial S_{n}} = \sigma_{n} {\rm ln} (S_{n}) + \sigma_{wn}S_{w},
\end{align*}
where $F$ is expressed as
\begin{align*} 
	F(S_{w},S_{n})= \sum_{\alpha=w,n } \sigma _{\alpha }S_{\alpha }( {\rm ln} (S_{\alpha })-1)+   \sigma _{wn}S_{w}S_n,
\end{align*}
and $\sigma_{w}$, $\sigma_{n}$ and $\sigma_{wn}$ are positive energy parameters.

\textcolor{black}{Constructing numerical schemes for the two-phase flow model that are both high-order accurate and structure-preserving is notoriously challenging. The importance of this problem has attracted widespread attention, leading to the development of various numerical strategies \cite{hunt2014percolation} from different research groups. For instance, classical approaches such as the improved IMPES method have been explored for computational efficiency \cite{chen2004improved}, while other research, reviewed in works like \cite{joekar2012analysis}, utilizes dynamic pore-network models to clarify the underlying physics at both the pore and representative elementary volume scales. While many numerical methods have been proposed} 
\cite{CHEN2019641, ERN20101491, girault2021finite, hunt2014percolation},
\textcolor{black}{a central challenge lies in simultaneously satisfying several critical physical properties. Indeed, many established techniques often compromise on one or more key aspects such as unconditional energy stability, strict bound preservation, and mass conservation. For instance, the popular invariant energy quadratization (IEQ) \cite{yang2017numerical} or scalar auxiliary variable (SAV) \cite{shen2018scalar} approaches, while successful in achieving energy stability, frequently do so for a modified energy functional. This often compromises the original energy dissipation law and fails to preserve strict saturation bounds, particularly for higher-order schemes. These limitations are evident in recent works. The first-order IEQ-based scheme by Kou et al. \cite{kou2022energy}, for example, achieves a bound but is limited to first-order accuracy and requires a strict CFL condition. Similarly, the high-order schemes by Li et al. \cite{LI2025113864} based on a modified SAV approach preserve a modified energy dissipation  law and ensure global mass conservation, while the original energy dissipation or crucial local mass conservation is not satisfied. The difficulty in preserving these properties is not confined to a single class of methods; for instance, the discontinuous Galerkin (DG) formulations presented by W. Klieber and B. Riviere \cite{KLIEBER2006404} also struggle to maintain mass conservation for both phases. Furthermore, a significant challenge in many existing models is the high computational cost arising from the tight coupling between the saturation and the pressure. Consequently, the design of numerical schemes that are simultaneously high-order, fully-decoupled, bound-preserving, locally mass conservative, and original energy dissipative for this model remains a significant objective.} 

Therefore, inspired by the positivity-preserving approach developed in \cite{CiCPchen, CHEN2019100031, liu2021positivity} for the general dissipative systems, in this paper we propose the first- and second-order-in-time, fully-decoupled, structure-preserving numerical scheme,s based on the convex splitting approach in the temporal discretization and the cell centered finite difference spatial approximation. Our main contributions are listed as follows:
\begin{itemize}
	\item \textcolor{black}{We develop efficient, fully-decoupled, first- and second-order schemes for the thermodynamically consistent two-phase flow model. Distinct from existing approaches, these schemes are constructed by subtracting, rather than adding, the mass conservation equations of the two phases, thereby achieving a full decoupling. }
	
	\item \textcolor{black}{The constructed first-order scheme is bound-preserving and unconditionally uniquely solvable.} In terms of the second-order scheme, these properties hold under a parameter constraint $\sigma_{wn} < \frac{1}{2}(\sigma_{w}+\sigma_{n})$, where $\sigma_{w}$, $\sigma_{n}$ and $\sigma_{wn}$ denote the energy parameters, which ensures the strict convexity of the discrete energy functionals corresponding to the associated numerical schemes. Moreover, the constructed schemes satisfy local mass conservation and unconditionally obey the original energy dissipation law.
	
	\item A rigorous error estimate for the first-order scheme is established in the $\ell^{\infty}(0,T;  H_h^{-1} (\Omega))$\\$ \cap \ell^{2}(0,T; \ell^2(\Omega))$ norm for saturations of two phases, addressing a critical gap in the theoretical analysis of structure-preserving schemes for two-phase flow in porous media. 
	
\end{itemize}

\textcolor{black}{The primary contribution of this work is a novel numerical framework that, for the first time, unifies a complete set of crucial structure-preserving properties for the two-phase flow in porous media. Specifically, our framework yields both first- and second-order accurate schemes that are fully decoupled, while rigorously maintaining the original energy dissipation law, strict physical bounds on saturation, and local mass conservation. A further significant contribution is the rigorous convergence analysis for the first-order scheme, which represents the first such result for a scheme possessing such properties. While the reported schemes are based on a finite difference spatial discretization, the underlying theoretical principles are general, suggesting a clear pathway for extension to other spatial approximations, such as mass-lumped finite element or pseudo-spectral methods.}

The paper is organized as follows. In Section \ref{section2}, we review the physical properties of the model and some preliminary notations. In Section \ref{section3}, we propose the first- and second-order fully discrete numerical schemes with bound-preserving properties. In Section \ref{section4}, we present the proof of mass conservation and energy stability analysis. In Section \ref{section5}, a rigorous error analysis is provided for the first-order scheme. In Section \ref{section6}, a few numerical experiments are presented to validate the theoretical results. Finally, some concluding remarks are given in Section \ref{section7}.

\section{The physical properties of the model and some preliminaries}\label{section2}
Let $\Omega$ be a connected and smooth domain in space and the system is equipped with proper boundary conditions, dependent on the specified problems. We first introduce the total free energy 
\begin{align*}  
	\mathcal{E}_{tol}(t) = \int_{\Omega }\phi F(S_{w},S_{n}) d \textbf{x} , 
\end{align*}
and derive the corresponding energy dissipation law.
\medskip
\begin{theorem} \rm{\cite{kou2022energy, LI2025113864}} \label{thm_energy dissipation}
	The system \eqref{e_modela}-\eqref{e_modelc} satisfies
	\begin{align*}
		\frac{\partial \mathcal{E}_{tol}(t)}{\partial t} = & \sum_{\alpha =w,n} \int_{\Omega } (p+\mu_{ \alpha }) q_{ \alpha } d {\bf x}  
		- \sum_{\alpha =w,n}  \int_{\partial \Omega }  (p+\mu_{ \alpha }) \textbf{u}_{ \alpha } \cdot \textbf{n} ~ds  \\
		&- \sum_{\alpha =w,n}\int_{\Omega }\lambda _{\alpha  }  {K}  \left | \nabla (p+\mu _{\alpha }) \right |^{2} d {\bf x} .\nonumber
	\end{align*}
	In addition, assuming $q_{\alpha}=0$ and the no-flow boundary condition $ \textbf{u}_{\alpha} \cdot \textbf{n} =0 $, where $ \textbf{n} $ is the normal unit outward vector to $\partial \Omega$, we obtain the following energy dissipation law:
	\begin{align*}
		\frac{\partial \mathcal{E}_{tol}(t)}{\partial t} =-\sum_{\alpha =w,n}\int_{\Omega }\lambda _{\alpha  }{K}\left | \nabla (p+\mu _{\alpha }) \right |^{2}d {\bf x} \le0 .
	\end{align*}
\end{theorem}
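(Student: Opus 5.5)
The plan is to differentiate $\mathcal{E}_{tol}$ in time and use the chain rule together with the definition of the chemical potentials. Since $F$ depends on $(S_w,S_n)$ and $\mu_\alpha=\partial F/\partial S_\alpha$, we get
\begin{align*}
\frac{d\mathcal{E}_{tol}}{dt}=\sum_{\alpha=w,n}\int_\Omega \phi\,\mu_\alpha \frac{\partial S_\alpha}{\partial t}\,d\mathbf{x}.
\end{align*}
Here $\phi$ is assumed time independent, and the saturations are assumed to stay strictly inside $(0,1)$, so the logarithms are smooth and differentiation under the integral is justified.

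The next step is to insert the pressure. Differentiating the constraint \eqref{e_modelc} gives $\partial_t S_w+\partial_t S_n=0$, so $\sum_\alpha \int_\Omega \phi\, p\,\partial_t S_\alpha\,d\mathbf{x}=0$. Adding this to the identity above yields
\begin{align*}
\frac{d\mathcal{E}_{tol}}{dt}=\sum_{\alpha}\int_\Omega \phi\,(p+\mu_\alpha)\,\partial_t S_\alpha\,d\mathbf{x}.
\end{align*}
This is the key point, since it is what allows the multiplier $p$ to enter: the mass equation alone involves the combined potential $p+\mu_\alpha$, not $\mu_\alpha$.

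Now substitute $\phi\,\partial_t S_\alpha = q_\alpha-\nabla\cdot\mathbf{u}_\alpha$ from \eqref{e_modela}. The source term gives $\int_\Omega (p+\mu_\alpha)q_\alpha\,d\mathbf{x}$. The divergence term is integrated by parts:
\begin{align*}
-\int_\Omega (p+\mu_\alpha)\nabla\cdot\mathbf{u}_\alpha\,d\mathbf{x}=-\int_{\partial\Omega}(p+\mu_\alpha)\,\mathbf{u}_\alpha\cdot\mathbf{n}\,ds+\int_\Omega \nabla(p+\mu_\alpha)\cdot\mathbf{u}_\alpha\,d\mathbf{x}.
\end{align*}
Using Darcy's law \eqref{e_modelb} with $\mathbf{K}=K\mathbf{I}$, the last integral equals $-\int_\Omega\lambda_\alpha K|\nabla(p+\mu_\alpha)|^2\,d\mathbf{x}$. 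Summing over $\alpha$ gives the first identity.

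For the dissipation law, set $q_\alpha=0$ and $\mathbf{u}_\alpha\cdot\mathbf{n}=0$, which kills the source and boundary terms. Nonpositivity then follows from $\lambda_\alpha>0$ and $K>0$.

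The only delicate step is the regularity needed for the chain rule and the integration by parts. This requires assuming a sufficiently smooth solution with $0<S_\alpha<1$; I would state this as a standing assumption rather than prove it.
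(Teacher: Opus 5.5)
Your proof is correct and follows the standard argument. The paper does not prove this theorem itself; it defers to the cited references, which proceed the same way: chain rule with $\mu_\alpha=\partial F/\partial S_\alpha$ (this agrees with \eqref{e_modeld} once $S_w+S_n=1$ is used), then inserting $p$ via $\partial_t(S_w+S_n)=0$, substituting the mass balance, and integrating by parts with Darcy's law. The paper's own discrete energy-dissipation proofs use exactly this sequence of steps.
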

The proof of the above theorem can be found in \cite{kou2022energy, LI2025113864}.
\subsection{Notations and discrete operators}\label{nota1}
The cell centered finite difference is taken as the spatial discretization. For simplification of presentation, we consider the two-dimensional rectangular domain $\Omega=[a_{1},b_{1}]\times[a_{2},b_{2}]$, while the three-dimensional case could be similarly formulated. For the sake of brevity, a uniform mesh of $\Omega$ is used with mesh size $h=\frac{b_{1}-a_{1}}{N}=\frac{b_{2}-a_{2}}{M}$, where $N$ and $M$ are positive integers. The cell points are denoted by $(x_{i},y_{i})$, where $x_{i}=a_{1}+ih,i=0,1,\cdots,N$, and $y_{j}=a_{2}+ih,j=0,1,\cdots,M$. We also introduce the staggered mesh points $x_{i+\frac{1}{2}}=x_{i}+\frac{1}{2}h$ and 
$y_{j+\frac{1}{2}}=y_{j}+\frac{1}{2}h$.
{\color{black}
	The discrete function spaces are defined as follows
	\begin{align*}
		&\mathcal{C}_{h}= \{ c:(x_{i+\frac{1}{2}},y_{j+\frac{1}{2}})\mapsto \mathbb{R},~ 0 \le i \le N-1,0 \le j \le M-1  \},\\
		&\mathcal{U}_{h}= \{ u:(x_{i},y_{j+\frac{1}{2}})\mapsto \mathbb{R},~0 \le i \le N, 0 \le j \le M-1  \} ,\\
		&\mathcal{V}_{h}= \{ v:(x_{i+\frac{1}{2}},y_{j})\mapsto \mathbb{R},~0 \le i \le N-1, 0 \le j \le M  \}.
	\end{align*}
	Here, for the components of a discrete function in the above space, we use the identification $c_{i+\frac{1}{2},j+\frac{1}{2}}=c(x_{i+\frac{1}{2}},y_{j+\frac{1}{2}})$. 
	The mean zero space is defined as 
	\begin{align*}
		\mathcal{C}_{h}^0= \{ c\in \mathcal{C}_h \mid 0=\bar{c}:=\frac{h^2}{\left| \Omega \right| }\sum_{i,j} c_{i+\frac{1}{2},j+\frac{1}{2}}  \}.
	\end{align*}
	For any $c \in \mathcal{C}_{h}$, the average operators are defined as 
	$$
	\Pi_x c_{i+1,j+\frac{1}{2}}= \frac{1}{2} (c_{i+\frac{1}{2},j+\frac{1}{2}} +c_{i+\frac{3}{2},j+\frac{1}{2}}),\quad \Pi_y c_{i+\frac{1}{2},j+1}= \frac{1}{2} (c_{i+\frac{1}{2},j+\frac{1}{2}} +c_{i+\frac{1}{2},j+\frac{3}{2}}).
	$$
	The discrete gradient operator is introduced as $\nabla_{h}=[\nabla_{h,x},\nabla_{h,y}]^{T}$, where $\nabla_{h,x}c $ and $\nabla_{h,y}c $ are given by 
	\begin{equation*}
		\aligned
		\nabla_{h,x}c_{i,j+\frac{1}{2}}&=\frac{c_{i+\frac{1}{2},j+\frac{1}{2}}-c_{i-\frac{1}{2},j+\frac{1}{2}}}{h},~~
		\nabla_{h,y}c_{i+\frac{1}{2},j}&=\frac{c_{i+\frac{1}{2},j+\frac{1}{2}}-c_{i+\frac{1}{2},j-\frac{1}{2}}}{h}.
		\endaligned
	\end{equation*}
	For $u \in \mathcal{U}_{h}$ and $v \in \mathcal{V}_{h}$, the discrete divergence operator becomes $\nabla_{h} \cdot [u,v]^{T}=D_{x}u+D_{y}v$, where $D_{x}$ and $D_{y}$ take the form as 
	\begin{equation*}
		\aligned
		D_{x}u_{i+\frac{1}{2},j+\frac{1}{2}}=\frac{u_{i+1,j+\frac{1}{2}}-u_{i,j+\frac{1}{2}}}{h},~~
		D_{y}v_{i+\frac{1}{2},j+\frac{1}{2}}=\frac{v_{i+\frac{1}{2},j+1}-v_{i+\frac{1}{2},j}}{h}.
		\endaligned
	\end{equation*}
	The discrete inner products and norms are introduced as follows:
	\begin{align*}	(c,c')_{h}=h^{2}\sum\limits_{i=0}^{N-1}\sum\limits_{j=0}^{M-1}c_{i+\frac{1}{2},j+\frac{1}{2}}c'_{i+\frac{1}{2},j+\frac{1}{2}}, ~c, c' \in \mathcal{C}_{h},\\
		(u,u')_{h}=h^{2}\sum\limits_{i=1}^{N-1}\sum\limits_{j=0}^{M-1}u_{i,j+\frac{1}{2}}u'_{i,j+\frac{1}{2}},~ u, u' \in \mathcal{U}_{h},\\	(v,v')_{h}=h^{2}\sum\limits_{i=0}^{N-1}\sum\limits_{j=1}^{M-1}v_{i+\frac{1}{2},j}v'_{i+\frac{1}{2},j},~ v, v' \in \mathcal{V}_{h}.
	\end{align*}
	For $\textbf{u}=[u,v]^{T}$ and $\textbf{u}'=[u',v']^{T}$, where $u,u' \in \mathcal{U}_{h}$ and $v,v' \in \mathcal{V}_{h}$, the inner-product becomes  
	$$(\textbf{u},\textbf{u}')_{h}=(u,u')_{h}+(v,v')_{h}.$$
	The discrete norms for $c \in \mathcal{C}_{h}, u \in \mathcal{U}_{h}^{0}$ and $v \in \mathcal{V}_{h}^{0}$ are defined as
	$$\left \|  c \right \| _{h}=(c,c)_{h}^{1/2},\quad  \left \|  u \right \| _{h}=(u,u)_{h}^{1/2},\quad  \left \|  v \right \| _{h}=(v,v)_{h}^{1/2},$$
	where the discrete function spaces $\mathcal{U}_{h}^{0}$ and $\mathcal{V}_{h}^{0}$ are the subsets of $\mathcal{U}_{h}$ and $\mathcal{V}_{h}$ with the homogeneous Neumann boundary condition. For $\textbf{u}=[u,v]^{T}$, where $ u \in \mathcal{U}_{h}^{0} $ and  $ v \in \mathcal{V}_{h}^{0} $, we define 
	$$\left \|  \textbf{u}  \right \|_{h}^{2} =\left \|  u \right \| _{h}^{2}+\left \|  v \right \| _{h}^{2},$$which implies that 
	$$\left \|  \nabla_{h}c \right \| _{h}^{2}=\left \|  \nabla_{h,x}c \right \| _{h}^{2}+\left \|  \nabla_{h,y}c \right \| _{h}^{2}, \ \ c \in \mathcal{C}_{h}.$$
	Therefore, the following discrete  summation-by-parts formulas could be easily obtained 
	\begin{align}
		(u,\nabla_{h,x}c)_{h}=-(D_{x}u,c)_{h}, \ \ u \in \mathcal{U}_{h}^{0},~ c \in \mathcal{C}_{h},\label{4.1a} \\	(v,\nabla_{h,y}c)_{h}=-(D_{y}v,c)_{h}, \ \ v \in \mathcal{V}_{h}^{0},~ c \in \mathcal{C}_{h}.\label{4.1b}
	\end{align} 
	As a direct application of (\ref{4.1a}) and (\ref{4.1b}), we see that 
	\begin{equation*}
		(\nabla_{h}\cdot \textbf{u},c)_{h}=-(\textbf{u},\nabla_{h}c)_{h},
	\end{equation*}
	where $\textbf{u}=[u,v]^{T}$, $u \in \mathcal{U}_{h}^{0}$, $v \in \mathcal{V}_{h}^{0}$, and $c \in \mathcal{C}_{h}$.}

\section{Fully discrete numerical schemes and bound-preserving properties}\label{section3} In this section, we present first- and second-order fully-discrete numerical schemes with a theoretical analysis of the unique solvability and bound-preserving properties.

\subsection{The first-order scheme}
The first-order fully-discrete scheme is proposed as follows: 
\begin{align}
	&\phi_h \frac{S_{w,h}^{k+1}-S_{w,h}^{k}}{\Delta t} - \nabla_h \cdot \left( \check{M}_{w,1}  \nabla_h(p_h^{k+1} + \mu_{w,h}^{k+1})    \right) = 0,\label{first_scheme1}\\
	&\phi_h \frac{S_{n,h}^{k+1}-S_{n,h}^{k}}{\Delta t} - \nabla_h \cdot \left( \check{M}_{n,1}  \nabla_h(p_h^{k+1} + \mu_{n,h}^{k+1})    \right) = 0,\label{first_scheme2}\\
	&\mu_{w,h}^{k+1}= \sigma_{w} {\rm ln}(S_{w,h}^{k+1} ) + \sigma_{wn} S_{n,h}^{k},\label{first_scheme3}\\
	&\mu_{n,h}^{k+1}= \sigma_{n} {\rm ln}(S_{n,h}^{k+1} ) + \sigma_{wn} S_{w,h}^{k},\label{first_scheme4}\\
	&S_{n,h}^{k+1}=1-S_{w,h}^{k+1},\label{first_scheme5}
\end{align}
where the mobility functions are given by $\check{M}_{\alpha,1}=\frac{ ( {S}_{\alpha,h}^{k} )^m }{\eta_{\alpha}} K$ for $\alpha=w,n$, namely, $\check{M}_{\alpha,1,i+1,j+\frac{1}{2}}=\Pi_x({\lambda}^{k}_{\alpha} K)$ and $\check{M}_{\alpha,1,i+\frac{1}{2},j+1}=\Pi_y({\lambda}^{k}_{\alpha} K)$.
Here $\phi_h$ depends on space, while it is time-independent, and satisfies $0< \phi_h < 1$.
The operators are defined as $\mathcal{L}_{\check{M}_{\alpha,1}}=(- \nabla_h \cdot  \check{M}_{\alpha,1}  \nabla_h )/\phi_h $.

It is obvious that \eqref{first_scheme1}-\eqref{first_scheme5} are coupled nonlinear system for $p^{k+1}_h$ and $S^{k+1}_{w,h}$. We eliminate the Lagrange multiplier $p^{k+1}_h$ by applying the operators $\mathcal{L}^{-1}_{\check{M}_{w,1}}$ and $\mathcal{L}^{-1}_{\check{M}_{n,1}}$ to  \eqref{first_scheme1} and \eqref{first_scheme2} respectively, then subtract the two equations. This process in turn yields the following equivalent fully decoupled numerical scheme. 

\medskip
\textbf{Efficient implementation:}
\textit{Step 1}: Given $S_{w,h}^k$ and $S_{n,h}^k$, find $S_{w,h}^{k+1}$ such that
\begin{align}
	&\mu_{w,h}^{k+1}-\mu_{n,h}^{k+1}+ \frac{1}{\Delta t} \mathcal{L}^{-1}_{\check{M}_{w,1}} ( S_{w,h}^{k+1}-S_{w,h}^{k} ) + \frac{1}{\Delta t} \mathcal{L}^{-1}_{\check{M}_{n,1}} (S_{w,h}^{k+1}-S_{w,h}^{k}) =0,\label{refirst_w}\\
	&\mu_{w,h}^{k+1}= \sigma_{w} {\rm ln}(S_{w,h}^{k+1} ) + \sigma_{wn} S_{n,h}^{k},\label{refirst_scheme_muw}\\
	&\mu_{n,h}^{k+1}= \sigma_{n} {\rm ln}(1-S_{w,h}^{k+1} ) + \sigma_{wn} S_{w,h}^{k}.\label{refirst_scheme_mun}
\end{align}
\textit{Step 2}: Given $S_{w,h}^{k+1}$, find $p_{h}^{k+1}$ and $S_{n,h}^{k+1}$ such that
\begin{align}
	&p_h^{k+1} + \mu_{w,h}^{k+1} + \frac{1}{\Delta t} \mathcal{L}^{-1}_{\check{M}_{w,1}} ( S_{w,h}^{k+1}-S_{w,h}^{k} ) = 0,\label{refirst_p}\\
	&S_{n,h}^{k+1}=1-S_{w,h}^{k+1}.\label{refirst_n}
\end{align}	

\subsection{The second-order scheme}
The second-order fully-discrete scheme is proposed as follows:	
\begin{align}
	&\phi_h \frac{S_{w,h}^{k+1}- S_{w,h}^{k}}{\Delta t} - \nabla_h \cdot \Big(\check{M}_{w,2}  \nabla_h(p_h^{k+\frac{1}{2}} + \mu_{w,h}^{k+\frac{1}{2}}) \Big) = 0, \label{sec_scheme1} \\
	&\phi_h \frac{S_{n,h}^{k+1}-S_{n,h}^{k}}{\Delta t} - \nabla_h \cdot \Big( \check{M}_{n,2}  \nabla_h(p_h^{k+\frac{1}{2}} + \mu_{n,h}^{k+\frac{1}{2}}) \Big) = 0, \label{sec_scheme2} \\
	&\mu_{w,h}^{k+\frac{1}{2}} = \sigma_{w} \Big( \mathcal{H}(S_{w,h}^{k+1}, S_{w,h}^k) - 1 \Big) + \sigma_{wn} S_{n,h}^{k+\frac{1}{2}} + \Delta t (\ln S_{w,h}^{k+1} - \ln S_{w,h}^{k}), \label{sec_scheme3} \\
	&\mu_{n,h}^{k+\frac{1}{2}} = \sigma_{n} \Big( \mathcal{H}(S_{n,h}^{k+1}, S_{n,h}^k) - 1 \Big) + \sigma_{wn} S_{w,h}^{k+\frac{1}{2}} + \Delta t (\ln S_{n,h}^{k+1} - \ln S_{n,h}^{k}), \label{sec_scheme4} \\
	&S_{n,h}^{k+1} = 1-S_{w,h}^{k+1} , \label{sec_scheme5}
\end{align}
where $${\mathcal{H}(S_{\alpha,h}^{k+1}, S_{\alpha,h}^k)=\frac{ S_{\alpha,h}^{k+1}{\rm ln} (S_{\alpha,h}^{k+1})- S_{\alpha,h}^{k}{\rm ln} (S_{\alpha,h}^{k})}  { S_{\alpha,h}^{k+1} - S_{\alpha,h}^k }},$$
and the mobility functions are given by $\check{M}_{\alpha,2}=\frac{ ( \tilde{S}_{\alpha,h}^{k+1} )^m }{\eta_{\alpha}} K$ for $\alpha=w,n$, namely, $\check{M}_{\alpha,2,i+1,j+\frac{1}{2}}=\Pi_x(\tilde{\lambda}^{k}_{\alpha} K)$ and $\check{M}_{\alpha,2,i+\frac{1}{2},j+1}=\Pi_y(\tilde{\lambda}^{k}_{\alpha} K)$.
The term $\tilde{S}_{\alpha,h}^{k+1}$ represents a second-order explicit extrapolation of the saturation, defined as $\tilde{S}_{\alpha,h}^{k+1} = \frac{3}{2} S_{\alpha,h}^{k}- \frac{1}{2} S_{\alpha,h}^{k-1}$.
Finally, for convenience, we define the discrete operators $\mathcal{L}_{\check{M}_{\alpha,2}}=(- \nabla_h \cdot  \check{M}_{\alpha,2}  \nabla_h )/\phi_h $.

Analogous to the first-order scheme, an equivalent and fully decoupled numerical system could be derived as follows.

\medskip	
\textbf{Efficient implementation:}
\textit{Step 1}: Given $S_{w,h}^k$, $S_{w,h}^{k-1}$, $S_{n,h}^k$ and $S_{n,h}^{k-1}$, find $S_{w,h}^{k+1}$ such that
\begin{align}
	&\mu_{w,h}^{k+\frac{1}{2}} - \mu_{n,h}^{k+\frac{1}{2}} +  \frac{1}{\Delta t} \mathcal{L}^{-1}_{\check{M}_{w,2}} ( S_{w,h}^{k+1}-S_{w,h}^{k} )  +\frac{1}{\Delta t}\mathcal{L}^{-1}_{\check{M}_{n,2}} (S_{w,h}^{k+1}-S_{w,h}^{k})=0,\label{resecond_w}\\
	&\mu_{w,h}^{k+\frac{1}{2}}= \sigma_{w} \Big(\mathcal{H}(S_{w,h}^{k+1}, S_{w,h}^k) -1 \Big) + \sigma_{wn} (1-S_{w,h}^{k+\frac{1}{2}}) + \Delta t ({\rm ln} S_{w,h}^{k+1} - {\rm ln} S_{w,h}^{k}),\label{resecond_muw}\\
	&\mu_{n,h}^{k+\frac{1}{2}}= \sigma_{n} \Big(\mathcal{H}(1-S_{w,h}^{k+1}, 1-S_{w,h}^k) -1 \Big) + \sigma_{wn} S_{w,h}^{k+\frac{1}{2}} \label{resecond_mun}\\
	& \quad\quad\quad+ \Delta t ({\rm ln} (1-S_{w,h}^{k+1}) - {\rm ln} S_{n,h}^{k}).\nonumber
\end{align}
\textit{Step 2}: Given $S_{w,h}^{k+1}$, find $p_h^{k+1}$ and $S_{n,h}^{k+1}$ such that
\begin{align}
	&p_h^{k+\frac{1}{2}} + \mu_{w,h}^{k+\frac{1}{2}}+  \frac{1}{\Delta t} \mathcal{L}^{-1}_{\check{M}_{w,2}} ( S_{w,h}^{k+1}-S_{w,h}^{k} ) = 0,\label{resecond_p}\\
	&S_{n,h}^{k+1} =1-S_{w,h}^{k+1}.\label{resecond_n}
\end{align}

\begin{rem}
	In the second-order scheme, to ensure both the second-order temporal accuracy and a point-wise positive lower bound, the following formulas could be used
	\begin{align}
		\check{M}_{w,2,i+1,j+\frac{1}{2}}=(\Pi^2_x(\tilde{\lambda}^{k}_{w}K)_{i+1,j+\frac{1}{2}} + \Delta t^6)^{1/2},\nonumber\\	\check{M}_{w,2,i+\frac{1}{2},j+1}=(\Pi^2_y(\tilde{\lambda}^{k}_{w}K)_{i+\frac{1}{2},j+1} + \Delta t^6)^{1/2},\nonumber
	\end{align}
	to avoid a numerical instability. An analogous technique is applicable to $\check{M}_{n,2}$.
\end{rem}
\medskip
\begin{rem}
	In the case of $S_{w,h}^{k+1}=S_{w,h}^k$, we have the following definition for $\mu_{\alpha,h}$ in \eqref{resecond_muw} and \eqref{resecond_mun}, $\alpha=w,n$:  
	\begin{equation}
		\mu_{w,h}^{k+\frac{1}{2}} =
		\begin{cases} 
			\sigma_{w} ( {\mathcal{H}(S_{w,h}^{k+1}, S_{w,h}^k) }- 1 ) 
			+ \sigma_{wn}  S_{n,h}^{k+\frac{1}{2}} 
			+ \Delta t (\ln S_{w,h}^{k+1} - \ln S_{w,h}^{k}), &\hspace{-0.15cm} S_{w,h}^{k+1} \neq S_{w,h}^{k}, \\
			\sigma_{w}  \ln(S_{w,h}^{k+\frac{1}{2}})
			+ \sigma_{wn} S_{n,h}^{k+\frac{1}{2}}
			+ \Delta t (\ln S_{w,h}^{k+1} - \ln S_{w,h}^{k}), &\hspace{-0.15cm} S_{w,h}^{k+1} = S_{w,h}^{k},
		\end{cases}\nonumber
	\end{equation}
	where $S_{n,h}^{k+\frac{1}{2}}= 1-S_{w,h}^{k+\frac{1}{2}}$. A similar definition could be made to $\mu_{n,h}^{k+\frac{1}{2}}$.
\end{rem}

\medskip
\begin{rem}
	Due to the unbiased nature of the wetting and non-wetting phases, equations \eqref{refirst_w} and \eqref{resecond_w} can be equivalently reformulated to solve for the non-wetting phase saturation $S_{n,h}^{k+1}$.
\end{rem}
\medskip
Next, we present the theoretical analysis that the above numerical schemes have unique solutions, and the bound of unknowns $S_{\alpha}$, $\alpha=w,n$, is ensured, under homogeneous injection/production rates and boundary conditions. If the numerical solution to \eqref{refirst_w}-\eqref{refirst_n} exists, it is easy to see that
\begin{align*}
	\bar{S}_{w,h}^{k+1}= \bar{S}_{w,h}^{k}=\cdots =\bar{S}_{w,h}^0 = \beta_0, ~~\forall k \in  \mathbb{ N } .
\end{align*}
To derive the desired results, we first recall the following preliminary estimates \cite{CHEN2019100031} to continue the unique solvability and bound-preserving analysis for the constructed schemes. For any $\varphi \in  \mathcal{C}_{h}$, there exists a unique $\psi \in \mathcal{C}^0_{h}$ that solves
\begin{align*}
	\mathcal{L}_{D}(\psi) := -\nabla_h \cdot (D \nabla_h \psi) = \varphi - \overline{\varphi}, ~~\overline{\varphi} := |\Omega|^{-1}  (\varphi, 1)_{h}.
\end{align*}
Moreover, a bilinear form is defined for any $\varphi_1, \varphi_2 \in \mathcal{C}^0_{h}$ as follows
\begin{align*}
	\langle \varphi_1, \varphi_2 \rangle_{\mathcal{L}_D^{-1}} := (D \nabla_h \psi_1, \nabla_h \psi_2)_{h}, 
\end{align*}
where $\psi_i \in \mathcal{C}^0_{h}$ is the unique solution to
\begin{align*}
	\mathcal{L}_D(\psi_i) := -\nabla_h \cdot (D \nabla_h \psi_i) = \varphi_i, \quad i = 1, 2. 
\end{align*}
The following identity turns out to be straightforward: 
\begin{align*}
	\langle \varphi_1, \varphi_2 \rangle_{\mathcal{L}_D^{-1}} = ( \varphi_1, \mathcal{L}_D^{-1}(\varphi_2) )_{h} = ( \mathcal{L}_D^{-1}(\varphi_1), \varphi_2 )_{h}, 
\end{align*}
and since $\mathcal{L}_D$ is symmetric positive definite and $\langle \cdot, \cdot \rangle_{L_D^{-1}}$ is an inner product on $\mathcal{C}^0_{h}$ \cite{wcsina2011}. In general, the norm associated to this inner product is denoted as $\|\varphi\|_{L_D^{-1}} := \sqrt{\langle \varphi, \varphi \rangle_{L_D^{-1}}}$, for all $\varphi \in \mathcal{C}^0_{h}$. In particular, if $D \equiv 1$, the discrete $H^{-1}$ inner product and norm are defined as 
\begin{equation} 
	\begin{aligned} 
		& 
		\langle \varphi_1 , \varphi_2 \rangle_{-1,h} = ( \varphi_1 , (-\Delta_h)^{-1} \varphi_2 )_h  
		= ( (-\Delta_h)^{-1} \varphi_1, \varphi_2 )_h ,  \quad \forall  \varphi_1, \varphi_2 \in \mathcal{C}^0_{h} , 
		\\
		& 
		\| \varphi \|^2_{-1,h} = \langle \varphi , \varphi \rangle_{-1,h} , \quad 
		\forall  \varphi \in \mathcal{C}^0_h . 
	\end{aligned} 
	\label{discrete H-1} 
\end{equation} 


\medskip
\begin{lemma}\label{lemL}
	\rm{	\cite{CHEN2019100031}} Suppose that $\varphi_1, \varphi_2 \in \mathcal{C}_{h}$, with $\varphi_1 - \varphi_2 \in \mathcal{C}^0_{h}$. Assume that $\|\varphi_1\|_{\infty}, \|\varphi_2\|_{\infty} \le \mathcal{M}_h$, with a constant $\mathcal{M}_h > 0$, and $\check{\mathcal{M}} \geq {\mathcal{M}}_0$ at a point-wise level, for some constant $\mathcal{M}_0 > 0$ that is independent of $h$. Then we have the following estimate:
	\begin{equation*}
		\left \| 	\mathcal{L}_{\check{\mathcal{M}}}^{-1}(\varphi_1 - \varphi_2)\right\|_{\infty}   \leq C_2 := \widetilde{C}_2 \mathcal{M}_0^{-1}h^{-1/2},
	\end{equation*}
	where $\widetilde{C}_2 > 0$ depends only upon $\mathcal{M}_h$ and  $\Omega$.
\end{lemma}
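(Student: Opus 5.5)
Set $\varphi := \varphi_1-\varphi_2 \in \mathcal{C}^0_h$ and let $\psi := \mathcal{L}_{\check{\mathcal{M}}}^{-1}(\varphi) \in \mathcal{C}^0_h$. Then $-\nabla_h\cdot(\check{\mathcal{M}}\nabla_h\psi)=\varphi$, with $\check{\mathcal{M}}$ the face-centered mobility; a factor $\phi_h$ in the operator's definition is harmless since $0<\phi_h<1$. The plan is a chain of three estimates: an energy estimate in $\|\nabla_h\psi\|_h$, a discrete Poincaré inequality to pass to $\|\psi\|_h$, and an inverse inequality to pass from $\|\cdot\|_h$ to $\|\cdot\|_\infty$. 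The $h^{-1/2}$ in $C_2$ suggests the inverse step is applied not to $\psi$ directly but to a one-dimensional slice or a gradient-type quantity. I state the plan for $d=2$ and note where dimension matters.

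\emph{Energy estimate.} Take the discrete inner product of the equation with $\psi$ and use the summation-by-parts formulas \eqref{4.1a}--\eqref{4.1b}; the homogeneous Neumann condition makes boundary fluxes vanish. This gives
\begin{align*}
\mathcal{M}_0\|\nabla_h\psi\|_h^2 \le (\check{\mathcal{M}}\nabla_h\psi,\nabla_h\psi)_h = (\varphi,\psi)_h \le \|\varphi\|_h\|\psi\|_h .
\end{align*}
Since $\psi$ has zero mean, the discrete Poincaré inequality $\|\psi\|_h\le C_\Omega\|\nabla_h\psi\|_h$ applies. Combined with $\|\varphi\|_h\le 2|\Omega|^{1/2}\mathcal{M}_h$, this yields $\|\nabla_h\psi\|_h\le C\mathcal{M}_0^{-1}\mathcal{M}_h$ and $\|\psi\|_h\le C\mathcal{M}_0^{-1}\mathcal{M}_h$, with constants depending only on $\Omega$.

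\emph{Passage to $\ell^\infty$.} Fix a cell $(i_0+\tfrac12,j_0+\tfrac12)$. Because $\bar\psi=0$, some cell $(i_1+\tfrac12,j_1+\tfrac12)$ has $\psi\le 0$ and another has $\psi\ge 0$. Write $\psi$ at the target cell as a sum of differences along a path: first along row $j_0$, then along column $i_1$ (or the reverse). Each difference equals $h$ times a component of $\nabla_h\psi$, and the path has length $O(h^{-1})$. Cauchy--Schwarz along one line then gives a bound $\big(\sum_{\text{line}}h\,|\nabla_h\psi|^2\big)^{1/2}$ up to a constant. However, a one-dimensional line sum picks up only the factor $h$, whereas the two-dimensional norm $\|\nabla_h\psi\|_h^2$ has weight $h^2$. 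Hence a single line sum is at most $h^{-1}\|\nabla_h\psi\|_h^2$, which gives $|\psi|\lesssim h^{-1/2}\|\nabla_h\psi\|_h$.

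\emph{Avoiding a sign-based comparison.} Rather than comparing to a cell of controlled sign, I would write $\psi_{i_0,j_0}-\psi_{i,j_0}$ as a line sum and average over $i$. This bounds $|\psi_{i_0,j_0}-\text{(row mean)}|\le Ch^{-1/2}\|\nabla_h\psi\|_h$. Next, bound the row mean by $C\|\psi\|_h$ plus a similar column-averaged term, each of size at most $Ch^{-1/2}$. Altogether $\|\psi\|_\infty\le C h^{-1/2}(\|\nabla_h\psi\|_h+\|\psi\|_h)\le \widetilde C_2\mathcal{M}_0^{-1}h^{-1/2}$.

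\emph{Main obstacle.} The delicate step is the $\ell^\infty$ passage: it must be organized so that the loss is exactly $h^{-1/2}$ and not $|\ln h|^{1/2}$ or $h^{-1}$. The crude 2D inverse inequality $\|\psi\|_\infty\le h^{-1}\|\psi\|_h$ would lose too much, and the sharper discrete Sobolev bound is harder to prove. I would therefore commit to the line-sum argument, which works on the rectangle and extends to 3D with the loss $h^{-1}$ replaced appropriately (e.g. via plane sums); the stated $h^{-1/2}$ only needs to hold in the presented 2D setting.
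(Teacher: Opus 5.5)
Your energy estimate and Poincaré step are essentially the argument behind the cited lemma from \cite{CHEN2019100031}; the paper itself only quotes that result. The only cosmetic difference is that one may write $(\varphi,\psi)_h\le\|\varphi\|_{-1,h}\|\nabla_h\psi\|_h$ instead of $\|\varphi\|_h\|\psi\|_h$.

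The passage to $\ell^\infty$ is where you differ. The cited result is three-dimensional, and the standard route there is the bound $\|\psi\|_\infty\le Ch^{-1/2}\|\psi\|_{H_h^1}$. This comes from the discrete embedding $H_h^1\hookrightarrow\ell^6$ followed by the inverse inequality $\|\psi\|_\infty\le h^{-1/2}\|\psi\|_6$, which holds because $h^3|\psi_{i,j,k}|^6\le\|\psi\|_6^6$.

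In two dimensions your line-sum argument is a correct and more elementary substitute. The row estimate $h\sum_{\mathrm{row}}|\nabla_{h,x}\psi|^2\le h^{-1}\|\nabla_{h,x}\psi\|_h^2$ gives $|\psi_{i_0,j_0}-m_{j_0}|\le Ch^{-1/2}\|\nabla_h\psi\|_h$. The row mean is harmless: either $|m_{j_0}|\le (L_xh)^{-1/2}\|\psi\|_h$ directly, or $|m_{j_0}|\le C\|\nabla_h\psi\|_h$ by averaging column differences and using $\bar\psi=0$. So the 2D statement follows, with room to spare, since the sharp 2D loss is only $|\ln h|^{1/2}$.

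The genuine gap is the dimension. The rate $h^{-1/2}$ is the three-dimensional rate, and this paper invokes the lemma in three dimensions. Theorems \ref{thmfirst} and \ref{thmsec} are carried out with the $h^{-3}$ normalization of the directional derivative and directions $\psi_{i,j,k}$. Your remark that the bound ``only needs to hold in the presented 2D setting'' therefore does not match how the lemma is used, and your suggested extension fails.

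\begin{itemize}
\item In 3D a single line sum gives $h\sum_{\mathrm{line}}|\nabla_{h,x}\psi|^2\le h^{-2}\|\nabla_{h,x}\psi\|_h^2$, so the loss is $h^{-1}$.
\item Slicing into planes does not repair this. The planar $H^1$ norm of the slice through the given cell is only controlled by $h^{-1/2}$ times the 3D norm, so your 2D argument on that slice again returns $h^{-1}$.
\item Because $\check{\mathcal{M}}$ is merely bounded below, $H^2$ elliptic regularity is unavailable as a fallback.
\end{itemize}

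What you actually need is an $H^1$-level embedding stronger than single-line sums. Namely, use $\|\psi\|_6\le C\|\psi\|_{H_h^1}\le C\|\nabla_h\psi\|_h$ for mean-zero $\psi$; this is a discrete Gagliardo--Nirenberg--Sobolev inequality, which combines line sums in all three directions. Follow it with $\|\psi\|_\infty\le h^{-1/2}\|\psi\|_6$. With that replacement your proof is complete in 3D.

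For the downstream theorems the exact power of $h$ is irrelevant, since $\delta$ is chosen after $h$ and $\Delta t$ are fixed, so your $h^{-1}$ bound would suffice there. As a proof of the lemma as stated, however, the proposal is incomplete.

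A minor point: keeping $\phi_h$ in $\mathcal{L}_{\check{M}}$ is harmless for the bounds. However, it changes the compatibility condition to $(\phi_h\varphi,1)_h=0$. The lemma concerns $\mathcal{L}_D=-\nabla_h\cdot(D\nabla_h\,\cdot)$, so it is cleaner to drop $\phi_h$.
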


\medskip
To facilitate the subsequent analysis, we introduce the following three smooth functions \cite{CiCPchen}, for any fixed $a > 0$: 
\begin{align*}
	H_a^1(x)&:=\frac{F(x)-F(a)}{x - a},\quad\forall x>0,\\
	H_a^0(x)&:=\int_{a}^{x}H_a^1(t)dt=\int_{a}^{x}\frac{F(t)-F(a)}{t - a}dt, \quad\forall x>0,\\
	H_a^2(x)&:=(H_a^1)^\prime(x)=\frac{F^\prime(x)(x - a)-(F(x)-F(a))}{(x - a)^2}=(H_a^0)^{\prime\prime}(x), \quad\forall x>0,
\end{align*}
where $F(x)=x {\rm ln} x$. 

\medskip
\begin{lemma}\label{lemG} \rm{\cite{CiCPchen}} Let \(a>0\) be fixed, we have
	\begin{enumerate}
		\renewcommand{\theenumi}{(\roman{enumi})}
		\item $H_a^2(x) \geq 0$ and $H_a^0(x)$ is convex, as a function of $x$, for $x > 0$.
		\item  $(H_a^1)^\prime(x)=\frac{1}{2\xi}$, for some $\xi$ between $a$ and $x$.
	\end{enumerate}
\end{lemma}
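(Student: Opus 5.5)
Both parts are elementary calculus facts about $F(x)=x\ln x$, and I would prove them directly from the definitions of $H_a^1$, $H_a^0$ and $H_a^2$. The only analytic input needed is $F''(x)=1/x>0$ for $x>0$.

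For (ii), I would write the divided difference in integral form:
\[
H_a^1(x)=\frac{F(x)-F(a)}{x-a}=\int_0^1 F'\bigl(a+s(x-a)\bigr)\,ds .
\]
This shows that $H_a^1$ extends smoothly across $x=a$ with $H_a^1(a)=F'(a)$. Differentiating under the integral sign gives
\[
(H_a^1)'(x)=\int_0^1 s\,F''\bigl(a+s(x-a)\bigr)\,ds=\int_0^1 \frac{s}{a+s(x-a)}\,ds .
\]
The integrand is a positive function of $s$ times the weight $s$. Since $\int_0^1 s\,ds=\tfrac12$, the weighted mean value theorem for integrals gives $(H_a^1)'(x)=\tfrac12\cdot\frac{1}{\xi}$ for some $\xi=a+s^*(x-a)$. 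This $\xi$ lies between $a$ and $x$, which is exactly (ii).

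An alternative route is Taylor's theorem with Lagrange remainder. First, $F(a)=F(x)+F'(x)(a-x)+\tfrac12F''(\eta)(a-x)^2$, so the numerator of $H_a^2$ equals $\tfrac12F''(\eta)(x-a)^2$. This gives $H_a^2(x)=\tfrac{1}{2\eta}$ for $x\neq a$. At $x=a$ the limit value is $\tfrac12F''(a)=\tfrac{1}{2a}$, consistent with $\xi=a$.

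For (i), part (ii) immediately gives $H_a^2(x)=\frac{1}{2\xi}>0$ for all $x>0$, since $\xi>0$ lies between two positive numbers. Because $(H_a^0)''=(H_a^1)'=H_a^2\ge0$ on the interval $(0,\infty)$, the function $H_a^0$ is convex there.

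The only delicate point is the removable singularity at $x=a$: I must check that $H_a^1$ is $C^1$ (indeed smooth) there, so that $H_a^0$ is twice differentiable. The integral representation handles this uniformly, which is why I would use it as the primary argument rather than the pointwise Taylor argument.
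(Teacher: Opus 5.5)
Your proof is correct and complete. The paper gives no argument of its own here: the lemma is quoted from \cite{CiCPchen}, so there is no internal proof to compare against.

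Your secondary route is the usual short justification. Taylor's theorem with Lagrange remainder gives $F'(x)(x-a)-(F(x)-F(a))=\tfrac12F''(\eta)(x-a)^2$, hence $H_a^2(x)=\frac{1}{2\eta}$ for $x\neq a$. Used alone, that route still needs the separate limit argument at $x=a$, which you note.

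Your primary route is somewhat more self-contained. It rests on the representation $H_a^1(x)=\int_0^1F'(a+s(x-a))\,ds$ together with the weighted mean value theorem with weight $s$ and $\int_0^1 s\,ds=\tfrac12$. This shows in one stroke that $H_a^1$ is smooth across the removable singularity $x=a$. That smoothness is what makes $(H_a^0)''=(H_a^1)'$ valid on all of $(0,\infty)$, which the convexity claim in (i) tacitly requires.

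The same representation also gives the explicitly positive formula $(H_a^1)'(x)=\int_0^1\frac{s}{(1-s)a+sx}\,ds$. From it, the strict monotonicity of $H_a^1$ follows immediately; this is the property the paper later invokes in the proof of Theorem~\ref{thmsec}.
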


The strict convexity of the discrete energy functional guarantees a unique minimizer, which coincides with the numerical solution. The proof of Theorem \ref{thmsec} (regarding positivity preservation and unique solvability) relies on a parameter constraint below.

\begin{lemma}\label{energypa}
	Suppose that the energy parameters satisfy  $\sigma_{wn} < \frac{1}{2}({\sigma_{w}}+{\sigma_{n}})$ in \eqref{eq22}, then the following inequality holds true for any value of $ x \in (0,1)$,
	\begin{align}
		\frac{d^2}{d {x}^2}( \sigma_{w} H^0_{a}(x) + \sigma_{n} H^0_{1-a}(1-x) -  \frac{ \sigma_{wn} }{2}x^2 )>0 ,~\forall~ 0<a<1.\label{eq22}
	\end{align}
	\begin{proof}
		By recalling property (ii) in Lemma \ref{lemG}, we are able to equivalently rewrite \eqref{eq22} as
		\begin{align}\label{eqtras}
			\sigma_{wn} <( \sigma_{w}  H^2_{a}(x) + \sigma_{n} H^2_{1-a}(1-x))_{min}=(\sigma_{w}\frac{1}{2\xi_1}+\sigma_{n} \frac{1}{2\xi_2})_{min}, 
		\end{align}
		where $\xi_1 \in (0,1)$ between $a$ and $x$, $\xi_2 \in (0,1)$ between $1-a$  and  $1-x$. Specifically, since $\frac{1}{\xi_1}>1$ and $\frac{1}{\xi_2}>1$, we see that $\sigma_{w} \cdot \frac{1}{2\xi_1}+\sigma_{n} \cdot \frac{1}{2\xi_2} > \frac{1}{2} (\sigma_{w}+\sigma_{n}).$ Therefore, $ \frac{1}{2} (\sigma_{w}+\sigma_{n})$ serves as a lower bound for the right-hand side of \eqref{eqtras}. This leads to the loose constraint $\sigma_{wn} < \frac{1}{2}({\sigma_{w}}+{\sigma_{n}})$ in \eqref{eq22}. The proof is completed. 
	\end{proof}
\end{lemma}

\subsection{Unique solvability and bound-preserving analysis of the first-order scheme}
The bound-preserving and unique solvability properties of the first-order scheme \eqref{refirst_w}-\eqref{refirst_n} are established below in the three dimensional case. This analytical framework could be naturally extended to the two dimensional case.

\medskip
\begin{theorem}\label{thmfirst}
	Given $S_{\alpha,h}^k \in \mathcal{C}_h$, with $0 <S_{\alpha,h}^k < 1$ at a point-wise level. There exists a unique solution $S_{\alpha,h}^{k+1} \in \mathcal{C}_h$ to the first-order scheme \eqref{refirst_w}-\eqref{refirst_n}, with $0 <S_{\alpha,h}^{k+1} < 1$, $\alpha=w,n$. 
\end{theorem}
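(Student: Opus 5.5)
The plan is to recast Step 1, i.e. \eqref{refirst_w}--\eqref{refirst_scheme_mun}, as the Euler--Lagrange equation of a strictly convex functional on the admissible set
\[
A_h:=\{S\in\mathcal{C}_h:\ 0<S<1 \text{ point-wise},\ \bar S=\beta_0\},
\]
where $\beta_0=\bar S_{w,h}^k$. The equation can only hold up to a constant, because $\mathcal{L}^{-1}$ acts on mean-zero functions; this constant is absorbed by the Lagrange multiplier for the mass constraint. I would set $\varphi:=S-S_{w,h}^k\in\mathcal{C}_h^0$ and define
\begin{align*}
\mathcal{J}_h(S)=&\ \frac{1}{2\Delta t}\|\varphi\|^2_{\mathcal{L}^{-1}_{\check M_{w,1}}}+\frac{1}{2\Delta t}\|\varphi\|^2_{\mathcal{L}^{-1}_{\check M_{n,1}}}+\sigma_w\big(S\ln S,1\big)_h+\sigma_n\big((1-S)\ln(1-S),1\big)_h\\
&+\sigma_{wn}\big(S_{n,h}^k-S_{w,h}^k,S\big)_h .
\end{align*}
Its variation in a direction $\psi\in\mathcal{C}_h^0$ is $\big(\sigma_w\ln S+\sigma_{wn}S_{n,h}^k-\sigma_n\ln(1-S)-\sigma_{wn}S^k_{w,h}+\Delta t^{-1}(\mathcal{L}^{-1}_{\check M_{w,1}}+\mathcal{L}^{-1}_{\check M_{n,1}})\varphi,\psi\big)_h$. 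The constant additive parts $\sigma_w$, $\sigma_n$ coming from $(x\ln x)'$ do not matter because $\bar\psi=0$. Hence critical points of $\mathcal{J}_h$ on $A_h$ are exactly solutions of \eqref{refirst_w}, up to the constant that is fixed afterward. The mobilities $\check M_{\alpha,1}$ are built from $S^k_{\alpha,h}\in(0,1)$, so they are bounded below by some $\mathcal{M}_0>0$, and the $\mathcal{L}^{-1}$ quadratic forms are positive definite on $\mathcal{C}_h^0$. The entropy terms are strictly convex on $(0,1)$ and the coupling term is linear, because the $\sigma_{wn}$ part is treated explicitly. Therefore $\mathcal{J}_h$ is strictly convex on $A_h$, which gives uniqueness once existence of a minimizer is shown. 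This is why the first-order result needs no parameter constraint, in contrast to Lemma \ref{energypa}.

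For existence with bounds I would follow the approach of \cite{CHEN2019100031}. Fix $\delta>0$ and consider the compact set $A_{h,\delta}=\{S:\ \delta\le S\le 1-\delta,\ \bar S=\beta_0\}$, which is nonempty for small $\delta$ since $0<\beta_0<1$. Then $\mathcal{J}_h$ has a minimizer $S^\star$ on $A_{h,\delta}$, and the task is to show that $S^\star$ cannot touch the boundary when $\delta$ is small enough. Suppose $S^\star$ attains the value $\delta$ at a grid point $\vec\alpha_0$. Since $\bar S^\star=\beta_0<1-\delta$, there is a point $\vec\alpha_1$ where $S^\star\ge\beta_0$. I would take the direction $\psi=\delta_{\vec\alpha_0}-\delta_{\vec\alpha_1}$, which is mean-zero, and compute the one-sided directional derivative $\frac{1}{h^2}\frac{d}{ds}\mathcal{J}_h(S^\star+s\psi)|_{s=0}$. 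Its contributions are:
\begin{itemize}
\item the logarithmic terms give at most $\sigma_w\ln\delta-\sigma_w\ln\beta_0+\sigma_n\ln\frac{1-\beta_0}{1-\delta}$, which is $\approx\sigma_w\ln\delta$ plus a constant;
\item the explicit $\sigma_{wn}$ terms are bounded by $2\sigma_{wn}$, since $0<S^k_{\alpha,h}<1$;
\item the nonlocal terms satisfy $\big|\big(\mathcal{L}^{-1}_{\check M_{\alpha,1}}\varphi\big)_{\vec\alpha_0}-\big(\mathcal{L}^{-1}_{\check M_{\alpha,1}}\varphi\big)_{\vec\alpha_1}\big|\le 2C_2$ by Lemma \ref{lemL}, applied with $\varphi_1=S^\star$, $\varphi_2=S^k_{w,h}$, both bounded by $1$, and with the lower bound $\mathcal{M}_0$ on the mobility. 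This gives a bound $4C_2/\Delta t$, independent of $\delta$.
\end{itemize}
Choosing $\delta$ so small that $\sigma_w\ln\delta$ dominates all these $\delta$-independent quantities makes the directional derivative negative. Then moving slightly along $\psi$ stays in $A_{h,\delta}$ and strictly decreases $\mathcal{J}_h$, contradicting minimality. The case where $S^\star=1-\delta$ at some point is symmetric: use the $\sigma_n\ln(1-S)$ term and a point where $S^\star\le\beta_0$.

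The minimizer therefore lies in the interior of $A_{h,\delta}$ relative to the mean constraint. There the gradient of $\mathcal{J}_h$ is orthogonal to $\mathcal{C}_h^0$, i.e. it equals a constant, which is exactly \eqref{refirst_w} once the normalization of $\mathcal{L}^{-1}$ is accounted for. Strict convexity on the convex set $A_h$ then gives uniqueness among all solutions in $(0,1)$. Finally, Step 2 gives $S_{n,h}^{k+1}=1-S_{w,h}^{k+1}\in(0,1)$ directly, and $p_h^{k+1}$ follows explicitly from \eqref{refirst_p}.

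The main obstacle is the boundary-exclusion estimate. It needs a uniform $\ell^\infty$ bound on the $\mathcal{L}^{-1}$ terms, which comes from Lemma \ref{lemL} with a constant depending on $h$, $\Delta t$ and $\mathcal{M}_0$. That dependence is harmless here, because $\delta$ is allowed to depend on them, but it requires checking that the mobilities at the staggered points inherit a positive lower bound from $0<S^k_{\alpha,h}$ on the finite grid.
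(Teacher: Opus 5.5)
Your proposal is correct and follows essentially the same route as the paper. You use the same strictly convex functional; your $S\ln S$ and the paper's $S(\ln S-1)$ differ only by a term that is constant on the mean-constrained set. You also use the same $\delta$-truncated compact domain, the same direction $\psi=\delta_{\vec\alpha_0}-\delta_{\vec\alpha_1}$, and the same three-part bound (logarithmic singularity, explicit $\sigma_{wn}$ terms, and Lemma \ref{lemL} for the $\mathcal{L}^{-1}$ terms) to force a negative directional derivative. Your explicit handling of the additive constant in \eqref{refirst_w} through the mass-constraint multiplier is a point the paper leaves implicit.
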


\medskip
\begin{proof}
	The numerical solution of \eqref{refirst_w} is equivalent to the minimizer of the following discrete energy functional:
	\begin{align*}
		\mathcal{J}^k(S_w) &= \frac{1}{2 \Delta t}   \left \| S_w-S_{w,h}^{k} \right \|^2_{\mathcal{L}^{-1}_{\check{M}_{w,1}} }   +  \frac{1}{2 \Delta t} \left \| S_w-S_{w,h}^{k} \right \|^2_{\mathcal{L}^{-1}_{\check{M}_{n,1}} } +   \sigma_{w} (S_w,{\rm ln}(S_w) -1 )_h \nonumber\\&
		+ 	\sigma_{n} ((1-S_w),{\rm ln}(1-S_w ) -1 )_h  + \sigma_{wn} (1-2S_{w,h}^{k},S_w)_h,
	\end{align*}
	over an admissible set 
	\begin{align*}
		\Omega_{H}=\{ S_w \in \mathcal{C}_{h}~ |~ 0 \le S_w \le 1, (S_w-\bar{S}_{w,h}^0,1)_h=0 \}.
	\end{align*}
	It is clear that $\mathcal{J}^k(S_w)$ is a strictly convex function over this set.

Next, this minimization problem is transformed into an equivalent one, for the convenience of subsequent analysis:
\begin{equation}
	\begin{aligned}\label{first_convex}
		\mathcal{F}^k(\varphi):&= \mathcal{J}^k(\varphi+\bar{S}_{w,h}^0)\\
		&= \frac{1}{2 \Delta t}   \left \| \varphi+\bar{S}_{w,h}^0-S_{w,h}^{k} \right \|^2_{\mathcal{L}^{-1}_{\check{M}_{w,1}} }   +  \frac{1}{2 \Delta t} \left \| \varphi+\bar{S}_{w,h}^0-S_{w,h}^{k} \right \|^2_{\mathcal{L}^{-1}_{\check{M}_{n,1}} } \\
		&+   \sigma_{w} (\varphi+\bar{S}_{w,h}^0,{\rm ln}(\varphi+\bar{S}_{w,h}^0 ) -1)_h+ \sigma_{wn} (\varphi+\bar{S}_{w,h}^0,1-2S_{w,h}^{k})_h\\
		&+ 	\sigma_{n} ((1-(\varphi+\bar{S}_{w,h}^0)),{\rm ln}(1-(\varphi+\bar{S}_{w,h}^0) ) -1)_h ,
	\end{aligned}
\end{equation}
defined on the set
\begin{align*}
	\Omega_{H}^0=\{  \varphi \in \mathcal{C}^0_{h} ~|~ -\bar{S}_{w,h}^0 \le \varphi \le 1- \bar{S}_{w,h}^0 \} . 
\end{align*}
If $\varphi \in \Omega_{H}^0$ minimizes $\mathcal{F}^k$, then $S_w := \varphi + \bar{S}_{w,h}^0 \in \Omega_{H}$ minimizes $\mathcal{J}^k$, and vice versa. Subsequently, we aim to prove that there exists a minimizer of $\mathcal{F}^k$ over the domain $\Omega_{H}^0$. The following closed domain is taken into account: 
\begin{align*}
	\Omega_{H,\delta}^0=\{  \varphi \in \mathcal{C}^0_{h} ~|~ \delta -\bar{S}_{w,h}^0 \le \varphi \le 1- \bar{S}_{w,h}^0-\delta \} , \quad \mbox{for} \, \, \delta \in (0, \frac{1}{2}) . 
\end{align*}
Since $\Omega_{H,\delta}^0$ is a bounded, compact, and convex set, the continuous function  $\mathcal{F}^k$ has a minimizer over $\Omega_{H,\delta}^0$. The key point of the bound-preserving analysis is that such a minimizer could not occur on the boundary of $\Omega_{H,\delta}^0$, if $\delta$ is sufficiently small. To be more explicit, by the boundary of $\Omega_{H,\delta}^0$, we refer to the points of $\psi+ \bar{S}_{w,h}^0 =1-\delta$ or $\psi+ \bar{S}_{w,h}^0 =\delta$, precisely.

To get a contradiction, suppose that the minimizer of $\mathcal{F}^k$, call it $\varphi^{\star}$, occurs at a boundary point of $\Omega_{H,\delta}^0$. First, let us assume that $\varphi^{\star}_{\vec{\alpha}_0}+ \bar{S}_{w,h}^0=\delta$, so that the grid function $\varphi^{\star}$ has a global minimum at $\vec{\alpha}_0$. Let $\vec{\alpha}_{1}$ be a grid point where $\varphi^{*}$ attains its maximum value.  By the fact that $\bar{\varphi}^{\star}=0$, it is obvious that 
$$ 1-\delta \ge \varphi^{\star}_{\vec{\alpha}_1} + \bar
{S}_{w,h}^0 \ge  \bar{S}_{w,h}^0.$$
Since $\mathcal{F}^k$ is smooth over $\Omega_{H,\delta}^0$, for all $\varphi \in \mathcal{C}^0_{h}$, the directional derivative turns out to be 
\begin{equation}
	\begin{aligned}
		&d_s\mathcal{F}^k(\varphi^{\star}+s\psi)|_{s=0}
		= \frac{1}{ \Delta t}  (\mathcal{L}^{-1}_{\check{M}_{w,1}}( \varphi^{\star}+\bar{S}_{w,h}^0 -S_{w,h}^{k}) , \psi)_h  \\
		&\quad\quad+  \frac{1}{ \Delta t} (\mathcal{L}^{-1}_{\check{M}_{n,1}}( \varphi^{\star}+\bar{S}_{w,h}^0 -S_{w,h}^{k} ) , \psi )_h  +   (\sigma_{w} {\rm ln}( \varphi^{\star}+\bar{S}_{w,h}^0 ),\psi )_h \\
		&\quad\quad-   (\sigma_{n} {\rm ln}(1-\varphi^{\star} - \bar{S}_{w,h}^0 ),\psi)_h   +   (\sigma_{wn} (1-2S_{w,h}^{k} ),\psi )_h .
	\end{aligned}
\end{equation}
Let us pick the direction $\psi \in \mathcal{C}_h^0$, such that $ \psi_{i,j,k}=\delta_{i,i_0}\delta_{j,j_0} \delta_{k,k_0}-  \delta_{i,i_1}\delta_{j,j_1} \delta_{k,k_1}$. In turn, the derivative may be expressed as
\begin{align}
	&\frac{1}{h^3}d_s\mathcal{F}^k(\varphi^{\star}+s\psi)|_{s=0}\notag\\
	= &\frac{1}{ \Delta t}   \mathcal{L}^{-1}_{\check{M}_{w,1}}( \varphi^{\star}+\bar{S}_{w,h}^0 -S_{w,h}^{k})_{\vec{\alpha}_0 }  - \frac{1}{ \Delta t}   \mathcal{L}^{-1}_{\check{M}_{w,1}}( \varphi^{\star}+\bar{S}_{w,h}^0 -S_{w,h}^{k})_{\vec{\alpha}_1} \nonumber\\
	&+ \frac{1}{ \Delta t}   \mathcal{L}^{-1}_{\check{M}_{n,1}}( \varphi^{\star}+\bar{S}_{w,h}^0 -S_{w,h}^{k})_{\vec{\alpha}_0}  - \frac{1}{ \Delta t}   \mathcal{L}^{-1}_{\check{M}_{n,1}}( \varphi^{\star}+\bar{S}_{w,h}^0 -S_{w,h}^{k})_{\vec{\alpha}_1} \\
	&+   \sigma_{w} {\rm ln}( \varphi^{\star}_{\vec{\alpha}_0}+\bar{S}_{w,h}^0 ) - \sigma_{w} {\rm ln}( \varphi^{\star}_{\vec{\alpha}_1}+\bar{S}_{w,h}^0 ) - \sigma_{n} {\rm ln}(1-\varphi^{\star}_{\vec{\alpha}_0} - \bar{S}_{w,h}^0 )\nonumber\\
	& +\sigma_{n} {\rm ln}(1-\varphi^{\star}_{\vec{\alpha}_1} - \bar{S}_{w,h}^0 )+  \sigma_{wn} (-2 S_{w,h,\vec{\alpha}_0}^{k} 	+ 2 S_{w,h,\vec{\alpha}_1}^{k} ).\nonumber
\end{align}
Let $S_w^{\star}:=\varphi^{\star}+\bar{S}_{w,h}^0$. Since $S_{w,\vec{\alpha}_0}^{\star}=\delta$,  $ \bar{S}_{w,h}^0 \le S^{\star}_{w,\vec{\alpha}_1} \le 1-\delta$, we see that 
\begin{align}
	&\sigma_{w} {\rm ln}( S^{\star}_{w,\vec{\alpha}_0}) - \sigma_{w} {\rm ln}( S^{\star}_{w,\vec{\alpha}_1} )- \sigma_{n} {\rm ln}(1-S^{\star}_{w,\vec{\alpha}_0}  ) +\sigma_{n} {\rm ln}(1-S^{\star}_{w,\vec{\alpha}_1}  ) \nonumber\\ 
	\le&\sigma_{w} {\rm ln} \delta -\sigma_{n} {\rm ln}(1-\delta) -\sigma_{w}   {\rm ln}  ( \bar{S}_{w,h}^0 ) + \sigma_{n}   {\rm ln}(1-\bar{S}_{w,h}^0) \\
	=&\sigma_{w} {\rm ln} \frac{\delta}{ \bar{S}_{w,h}^0 } - \sigma_{n} {\rm ln} \frac{1-\delta}{ 1-\bar{S}_{w,h}^0 }.\nonumber
\end{align} 
In terms of the numerical solution $S_{w,h}^k$ at the previous time step, the a priori assumption $\|  S_{w,h}^k \|_\infty \le 1$ indicates that
\begin{align*}
	-2 \le	S_{w,h,\vec{\alpha}_1}^{k}  - S_{w,h,\vec{\alpha}_0}^{k}  \le 2.
\end{align*}
An application of Lemma \ref{lemL} yields
\begin{align*}
	-4C_2 \le A_w(\omega_{\vec{\alpha}_0}) - A_w(\omega_{\vec{\alpha}_1}) 
	+ A_n(\omega_{\vec{\alpha}_0}) - A_n(\omega_{\vec{\alpha}_1}) \le 4C_2,
\end{align*}
where $A_w(\omega_{\vec{\alpha}}) = \mathcal{L}^{-1}_{\check{M}_{w,1}}(\omega_{\vec{\alpha}})$, $ A_n(\omega_{\vec{\alpha}}) = \mathcal{L}^{-1}_{\check{M}_{n,1}}(\omega_{\vec{\alpha}})$, $\omega_{\vec{\alpha}} = (S_w^{\star} - S_{w,h}^k)_{\vec{\alpha}}$.

Consequently, the following bound becomes available: 
\begin{align*}
	\frac{1}{h^3}d_s\mathcal{F}^k(\varphi^{\star}+s\psi)|_{s=0} \le \sigma_{w} {\rm ln} \frac{\delta}{ \bar{S}_{w,h}^0 } - \sigma_{n} {\rm ln} \frac{1-\delta}{ 1-\bar{S}_{w,h}^0 }  +  4\sigma_{wn} + 4C_2 \Delta t^{-1}.
\end{align*}
We denote $C_3= 4\sigma_{wn} + 4C_2 \Delta t^{-1}$. Notice that $C_3$ is a constant for a fixed $\Delta t$ though it becomes singular as $\Delta t \to 0 $. For any fixed $\Delta t$, we may choose $\delta$ sufficiently small so that
\begin{align}\label{delta_re}
	\frac{1}{h^3}d_s\mathcal{F}^k(\varphi^{\star}+s\psi)|_{s=0} \le \sigma_{w} {\rm ln} \frac{\delta}{ \bar{S}_{w,h}^0 } - \sigma_{n} {\rm ln} \frac{1-\delta}{ 1-\bar{S}_{w,h}^0 }  +C_3 <0.
\end{align}   
In turn, the following inequality become valid, provided that $\delta$ satisfies \eqref{delta_re}: 
\begin{align*} \frac{1}{h^3}d_s\mathcal{F}^k(\varphi^{\star}+s\psi)|_{s=0} < 0.
\end{align*}
This contradicts the assumption that $\mathcal{F}^k$ reaches its minimum at $\varphi^{\star}$, since the directional derivative is negative in a direction that points towards the interior of $\Omega^0_{H,\delta}$. Similar to the above argument, we are also able to prove that the global minimum of $\mathcal{F}^k$ on $\Omega^0_{H,\delta}$ cannot occur at the boundary point $\varphi^{\star}$ such that $\varphi^{\star}_{\vec{\alpha}_0} + \bar{S}_{w,h}^0 = 1-\delta$, for some $\vec{\alpha}_0$. Therefore, the global minimum of $\mathcal{F}^k$ over $\Omega^0_{H,\delta}$ could only possibly occur at an interior point, for $\delta>0$  sufficiently small. Then we conclude that there must be a solution $S_w=\varphi+\bar{S}_{w,h}^0  \in \Omega_{H}$ that minimizes $\mathcal{J}^k$. The existence of the numerical solution is proved.
Meanwhile, since $\mathcal{J}^k$ is a strictly convex function over $\Omega_{H}$, the uniqueness analysis for this numerical solution $S_w$ is straightforward. Finally, due to the constraint $S_{n,h}^{k+1}=1-S_{w,h}^{k+1}$, we also have $S_{n,h}^{k+1} \in (0,1)$. This completes the proof of Theorem \ref{thmfirst}.
\end{proof}

\medskip
\begin{rem}
The above unique solvability and bound-preserving analysis for the case of $q_{\alpha} \ne 0,~ \alpha=w,n$,   remains valid by adding a bounded term $(-\mathcal{L}^{-1}_{\check{M}_{w,1}}(q_w^{k+1}/\phi_h) + \mathcal{L}^{-1}_{\check{M}_{n,1}}(q_n^{k+1}/\phi_h)  , S_w   )_h  $ in \eqref{first_convex}.
\end{rem}

\subsection{Unique solvability and bound preservation of the second-order scheme}
The second-order scheme \eqref{resecond_w}-\eqref{resecond_n} could be analyzed in a similar manner. An extension of this analytical framework to the two dimensional case is straightforward.

\medskip

\begin{theorem}\label{thmsec}
Given $S_{\alpha,h}^{k}, S_{\alpha,h}^{k-1} \in \mathcal{C}_h$, with $0< S_{\alpha,h}^{k}, S_{\alpha,h}^{k-1} <1, \alpha=w,n$, at a point-wise level. There exists a unique solution {\tiny } $S_{\alpha,h}^{k+1} \in \mathcal{C}_h$ to the second-order scheme \eqref{resecond_w}-\eqref{resecond_n}, with $0 <S_{\alpha,h}^{k+1} <1$, $\alpha=w,n$, under the condition that $\sigma_{wn}< \frac{1}{2}{ ({\sigma_{w}} +{\sigma_{n}})  }$.
\end{theorem}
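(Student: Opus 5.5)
The plan is to mirror the proof of Theorem \ref{thmfirst}: I will show that \eqref{resecond_w} is the Euler--Lagrange equation of a strictly convex discrete functional over the mass-constrained admissible set. The singularity of the logarithmic terms will then keep the minimizer away from $0$ and $1$.

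\textbf{The functional.} The key observation is that $\sigma_w(\mathcal{H}(x,a)-1)$ is exactly $\sigma_w\,(H^0_a)'(x)$ up to an $x$-independent shift, where $a=S^k_{w,h}$ and $F(x)=x\ln x$. Indeed $H^1_a(x)=\mathcal{H}(x,a)$, so one can take $\sigma_w (H^0_a(x) - x)$ as the primitive. Similarly, $-\sigma_n(\mathcal{H}(1-x,1-a)-1)$ is the $x$-derivative of $\sigma_n(H^0_{1-a}(1-x)-(1-x))$. For the coupling terms I use $S^{k+\frac12}_{w,h}=\frac12(x+S^k_{w,h})$. Then
\[
\sigma_{wn}\big(1-S^{k+\frac12}_{w,h}\big)-\sigma_{wn}S^{k+\frac12}_{w,h}=\sigma_{wn}\big(1-S^k_{w,h}-x\big),
\]
whose primitive is $-\frac{\sigma_{wn}}{2}x^2+\sigma_{wn}(1-S^k_{w,h})x$. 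The $\Delta t$ regularization terms contribute primitives
\[
\Delta t\big(x\ln x - x - x\ln S^k_{w,h}\big) \quad\text{and}\quad \Delta t\big((1-x)\ln(1-x)-(1-x)+x\ln S^k_{n,h}\big).
\]
Hence the solution of \eqref{resecond_w} is the critical point of
\[
\mathcal{J}^k(S_w)=\frac{1}{2\Delta t}\big(\|S_w-S^k_{w,h}\|^2_{\mathcal{L}^{-1}_{\check M_{w,2}}}+\|S_w-S^k_{w,h}\|^2_{\mathcal{L}^{-1}_{\check M_{n,2}}}\big)+\big(\sigma_wH^0_{S^k_{w,h}}(S_w)+\sigma_nH^0_{S^k_{n,h}}(1-S_w)-\tfrac{\sigma_{wn}}2S_w^2,1\big)_h+\Delta t\,(\text{log terms})+(\text{linear terms}),
\]
taken over $\Omega_H$.

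\textbf{Strict convexity.} The quadratic $\mathcal{L}^{-1}$ terms are convex on the mean-zero directions. This requires $\check M_{\alpha,2}>0$ pointwise, which holds because $\tilde S^{k+1}$ is used inside a positive mobility (with the $\Delta t^6$ regularization of the Remark if needed). The $\Delta t$ log terms are convex. Pointwise, the middle integrand is strictly convex by Lemma \ref{energypa}, using $\sigma_{wn}<\frac12(\sigma_w+\sigma_n)$. So $\mathcal{J}^k$ is strictly convex on $\Omega_H$, which gives uniqueness.

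\textbf{Existence and bounds.} I shift to $\varphi=S_w-\bar S^0_{w,h}\in\mathcal{C}^0_h$ and minimize over $\Omega^0_{H,\delta}$. I suppose the minimizer touches $S^\star_{w,\vec\alpha_0}=\delta$, and take the direction $\psi=\delta_{\vec\alpha_0}-\delta_{\vec\alpha_1}$, with $\vec\alpha_1$ the maximum point, so that $S^\star_{w,\vec\alpha_1}\ge\bar S^0_{w,h}$. The directional derivative then splits into four groups of terms.
\begin{itemize}
\item The $\mathcal{L}^{-1}$ terms are bounded by $4C_2/\Delta t$ via Lemma \ref{lemL}; its hypotheses hold because the mobilities are bounded below.
\item The $\sigma_{wn}$ terms are bounded by $2\sigma_{wn}$.
\item The divided differences $\mathcal{H}$ are bounded at $\vec\alpha_1$, because there both arguments lie in $[\bar S^0_{w,h},1-\delta]$ in the relevant sense. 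At $\vec\alpha_0$, $\mathcal{H}(\delta,a)$ stays bounded as $\delta\to0$ (it tends to $\ln a$), so it does \emph{not} produce the singularity. The $\mathcal{H}$ terms for the $n$-phase are likewise bounded by constants depending on $S^k$.
\item The singularity therefore comes from the regularizing term $\Delta t\ln\delta\to-\infty$, with the remaining $\Delta t$ log terms bounded by $\Delta t(|\ln S^k_{w,h}|+|\ln S^k_{n,h}|+|\ln \bar S^0_{w,h}|+\dots)$.
\end{itemize}
Choosing $\delta$ small, depending on $\Delta t$, $h$ and $\min S^k$, makes the derivative negative, which is a contradiction. The endpoint $1-\delta$ is handled symmetrically via $\Delta t\ln(1-x)$. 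Hence the minimizer is interior, solves \eqref{resecond_w}, and $0<S_{w,h}^{k+1}<1$; then $S^{k+1}_{n,h}=1-S^{k+1}_{w,h}$ gives the bounds for the non-wetting phase.

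The main obstacle is exactly this last point: unlike the first-order case, $\mathcal{H}$ is not singular at the boundary. The bound must come from the $\Delta t\ln$ terms, while carefully checking that every $\mathcal{H}$ contribution is uniformly bounded as $\delta\to0$. Those bounds depend on the given positivity of $S^k_{\alpha,h}$.
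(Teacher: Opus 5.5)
Your proposal is correct and follows the paper's proof essentially step for step. It uses the same functional, built from the primitives $H^0_{S^k_{w,h}}$ and $H^0_{1-S^k_{w,h}}$, the $-\frac{\sigma_{wn}}{2}S_w^2$ term and the $\Delta t$ logarithmic regularization. It also matches the paper in obtaining strict convexity from Lemma~\ref{energypa}, bounding the $\mathcal{L}^{-1}$ terms by $4C_2/\Delta t$ via Lemma~\ref{lemL}, showing the divided-difference terms stay bounded as $\delta\to 0$ (the paper uses monotonicity of $H^1_a$ where you use $H^1_a(0)=\ln a$), and letting the $\Delta t\ln\delta$ singularity force the minimizer into the interior.

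One caveat applies equally to the paper: the positivity of $\check M_{\alpha,2}$ needed for Lemma~\ref{lemL} does not follow from $\tilde S^{k+1}_{\alpha,h}$ itself, which can be zero or negative. It genuinely relies on the $\Delta t^6$ regularization in the Remark, so you should state that as an assumption rather than as an afterthought.
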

\begin{proof}
Since $0< S_{w,h}^{k}, S_{w,h}^{k-1} <1$, and there are finite number of grid points, we assume that there exists $\delta_0$ such that
\begin{align*}
	\delta_0 \le S_{w,h}^{k}, S_{w,h}^{k-1} \le 1-\delta_0,
\end{align*}
for some $\delta_0$ that may depend upon $\Delta t$. Meanwhile, the numerical solution of \eqref{resecond_w} turns out to be a minimizer of the following discrete energy functional
\begin{align}
	\mathcal{J}^{k}(S_w)&:= \frac{1}{2 \Delta t}   \left \| S_w-S_{w,h}^{k} \right \|^2_{\mathcal{L}^{-1}_{\check{M}_{w,2}} }  +\frac{1}{2 \Delta t}   \left \| S_w-S_{w,h}^{k} \right \|^2_{\mathcal{L}^{-1}_{\check{M}_{n,2}} }  \\
	&+\Delta t \left( (S_w, {\rm ln} (S_w))_h +  ((1-S_w), {\rm ln}(1-S_w))_h \right)+ (S_w,f_h^k)_h \nonumber\\
	&+ \sigma_{w} (H^0_{S_{w,h}^k}(S_w) ,1)_h + \sigma_{n} (H^0_{1-S_{w,h}^k}(1-S_w) ,1)_h -  \frac{ \sigma_{wn} }{2}\left\|S_w\right\| _h^2 , \nonumber\\
	f_h^k&:= \left( -\sigma_{w}+\sigma_{n}+ \sigma_{wn}(1-S_{w,h}^k) + \Delta t (- {\rm ln} (S_{w,h}^k) + {\rm ln} (1-S_{w,h}^k)) \right),
\end{align}
over an admissible set $\Omega_{H}$. {\color{black} Recalling Lemma \ref{energypa}}, it is clear that $\mathcal{J}^{k}(S_w)$ is a strictly convex function over this domain if the coefficients satisfy $\sigma_{wn} < \frac{1}{2}{ ({\sigma_{w}} +{\sigma_{n}})  }$. 

{\color{black}
	Again, the minimization problem is transformed into an equivalent form
	\begin{align}
		\mathcal{F}^k(\varphi):&= \mathcal{J}^k(\varphi+\bar{S}_{w,h}^0)\nonumber\\
		&=\frac{1}{2 \Delta t}   \left \| \varphi+\bar{S}_{w,h}^0-S_{w,h}^{k} \right \|^2_{\mathcal{L}^{-1}_{\check{M}_{w,2}} }  +\frac{1}{2 \Delta t}   \left \| \varphi+\bar{S}_{w,h}^0-S_{w,h}^{k} \right \|^2_{\mathcal{L}^{-1}_{\check{M}_{n,2}} } \nonumber\\
		&+\Delta t  (\varphi+\bar{S}_{w,h}^0, {\rm ln} (\varphi+\bar{S}_{w,h}^0))_h  
		+\Delta t   ( (1-\varphi-\bar{S}_{w,h}^0), {\rm ln}(1-\varphi-\bar{S}_{w,h}^0))_h  \nonumber \\
		&+\sigma_{w} (H^0_{S_{w,h}^k}(\varphi+\bar{S}_{w,h}^0) ,1)_h  +  \sigma_{n} (H^0_{1-S_{w,h}^k}(1-\varphi-\bar{S}_{w,h}^0) ,1)_h \nonumber \\
		&-  \frac{\sigma_{wn} }{2}\left\| \varphi+\bar{S}_{w,h}^0 \right\|_h^2 + (\varphi+\bar{S}_{w,h}^0,f_h^k)_h,\nonumber
	\end{align}
	defined on the admissible set $\Omega^0_{H}$. A minimizer $\varphi \in \Omega^0_{H}$ is equivalent to the minimizer of $S_w=\varphi+\bar{S}_{w,h}^0 \in \Omega_{H}$. Next, we aim to prove that there exists a minimizer of $\mathcal{F}^k$ over the domain $\Omega_{H}^0$. Similar to case of the first-order scheme, we get $$ 1-\delta \ge \varphi^{\star}_{\vec{\alpha}_1} + \bar{S}_{w,h}^0 \ge  \bar{S}_{w,h}^0.$$
	The directional derivative becomes  
	\begin{align}
		d_s\mathcal{F}^k(\varphi^{\star}+s\psi)|_{s=0}&= \frac{1}{ \Delta t}  (\mathcal{L}^{-1}_{\check{M}_{w,2}}( \varphi^{\star}+\bar{S}_{w,h}^0 -S_{w,h}^{k}) , \psi)_h - \sigma_{wn} ((\varphi^{\star}+\bar{S}_{w,h}^0), \psi)_h  \nonumber\\
		&+  \frac{1}{ \Delta t}  (\mathcal{L}^{-1}_{\check{M}_{n,2}}( \varphi^{\star}+\bar{S}_{w,h}^0 -S_{w,h}^{k} ) , \psi)_h +  (f_h^k, \psi)_h  \nonumber\\
		& + \sigma_{w} (H^1_{S_{w,h}^k}(\varphi^{\star}+\bar{S}_{w,h}^0) ,\psi)_h - \sigma_{n} (H^1_{1-S_{w,h}^k}(1-\varphi^{\star}-\bar{S}_{w,h}^0) ,\psi)_h \nonumber\\
		& + \Delta t  ({\rm ln}(\varphi^{\star}+\bar{S}_{w,h}^0) - {\rm ln} (1-\varphi^{\star}-\bar{S}_{w,h}^0),\psi)_h .
	\end{align}
	Meanwhile, the direction is chosen as $\psi_{i,j,k}=\delta_{i,i_0}\delta_{j,j_0} \delta_{k,k_0}-  \delta_{i,i_1}\delta_{j,j_1} \delta_{k,k_1}$, and the derivative may be expressed as
	\begin{align}\label{dsf}
		\frac{1}{h^3}d_s\mathcal{F}^k(\varphi^{\star}+s\psi)|_{s=0}&= \frac{1}{ \Delta t}   \mathcal{L}^{-1}_{\check{M}_{w,2}}( \varphi^{\star}+\bar{S}_{w,h}^0 -S_{w,h}^{k})_{\vec{\alpha}_0 } +\Delta t {\rm ln}(\varphi_{\vec{\alpha}_0}^{\star}+\bar{S}_{w,h}^0)    \nonumber\\
		&- \frac{1}{ \Delta t}   \mathcal{L}^{-1}_{\check{M}_{w,2}}( \varphi^{\star}+\bar{S}_{w,h}^0 -S_{w,h}^{k})_{\vec{\alpha}_1} - \Delta t {\rm ln} (1-\varphi_{\vec{\alpha}_0}^{\star}-\bar{S}_{w,h}^0)  \nonumber \\
		& + \frac{1}{ \Delta t}   \mathcal{L}^{-1}_{\check{M}_{n,2}}( \varphi^{\star}+\bar{S}_{w,h}^0 -S_{w,h}^{k})_{\vec{\alpha}_0} - \Delta t {\rm  ln}(\varphi_{\vec{\alpha}_1}^{\star}+\bar{S}_{w,h}^0)  \nonumber \\ 
		&- \frac{1}{ \Delta t}   \mathcal{L}^{-1}_{\check{M}_{n,2}}( \varphi^{\star}+\bar{S}_{w,h}^0 -S_{w,h}^{k})_{\vec{\alpha}_1}+ \Delta t {\rm ln} (1-\varphi_{\vec{\alpha}_1}^{\star}-\bar{S}_{w,h}^0) \nonumber\\   
		&+ \sigma_{w} (H^1_{S_{w,h}^k}(\varphi_{\vec{\alpha}_0}^{\star}+\bar{S}_{w,h}^0) - H^1_{S_{w,h}^k}(\varphi_{\vec{\alpha}_1}^{\star}+\bar{S}_{w,h}^0) )\nonumber\\
		&- \sigma_{n} (H^1_{1-S_{w,h}^k}(1-\varphi_{\vec{\alpha}_0}^{\star}-\bar{S}_{w,h}^0) -  H^1_{1-S_{w,h}^k}(1-\varphi_{\vec{\alpha}_1}^{\star}-\bar{S}_{w,h}^0)  )\nonumber\\
		& - \sigma_{wn} ((\varphi^{\star}+\bar{S}_{w,h}^0)_{\vec{\alpha}_0}- (\varphi^{\star}+\bar{S}_{w,h}^0)_{\vec{\alpha}_1}) +  (f^k_{h,\vec{\alpha}_0}- f^k_{h,\vec{\alpha}_1} ).
	\end{align}
	Denote $S_w^{\star}:=\varphi^{\star}+\bar{S}_{w,h}^0$. The fact that $S_{w,\vec{\alpha}_0}^{\star}=\delta$, $ \bar{S}_{w,h}^0 \le S^{\star}_{w,\vec{\alpha}_1} \le 1-\delta$ indicates 
	\begin{align} \label{eqwm2}
		-\sigma_{wn} ( S_{w,\vec{\alpha}_0 }^{\star}-   S_{w,\vec{\alpha}_1 }^{\star}) \le 	\sigma_{wn} (1-2\delta), 
	\end{align}
	and 
	\begin{align}
		{\rm ln}( S^{\star}_{w,\vec{\alpha}_0}) - {\rm ln}( S^{\star}_{w,\vec{\alpha}_1} )- {\rm ln}(1-S^{\star}_{w,\vec{\alpha}_0}  ) +{\rm ln}(1-S^{\star}_{w,\vec{\alpha}_1}  ) \le {\rm ln} \frac{\delta}{ 1-\delta } - {\rm ln} \frac{\bar{S}_{w,h}^0}{ 1-\bar{S}_{w,h}^0 }.\nonumber
	\end{align} 
	Again, an application of Lemma \ref{lemL} implies that 
	\begin{align}
		-4C_2 \le B_w(\omega_{\vec{\alpha}_0}) - B_w(\omega_{\vec{\alpha}_1}) 
		+ B_n(\omega_{\vec{\alpha}_0}) - B_n(\omega_{\vec{\alpha}_1}) \le 4C_2,
	\end{align}
	where $B_w(\omega_{\vec{\alpha}}) = \mathcal{L}^{-1}_{\check{M}_{w,2}}(\omega_{\vec{\alpha}})$, $ B_n(\omega_{\vec{\alpha}}) = \mathcal{L}^{-1}_{\check{M}_{n,2}}(\omega_{\vec{\alpha}})$, $\omega_{\vec{\alpha}} = (S_w^{\star} - S_{w,h}^k)_{\vec{\alpha}}$.
	
	Recalling Lemma \ref{lemG}, $H_{a}^{1}(x)$ is an increasing function of $x$ for a fixed $a >0$. Based on this property, we are able to derive the following inequalities 
	\begin{align}
		&\sigma_{w} (H^1_{S_{w,h}^k}( S_{w,\vec{\alpha}_0}^{\star} )  ) = \sigma_{w} (H^1_{S_{w,h}^k}(\delta)) \le  \sigma_{w} (H^1_{S_{w,h}^k}(S_{w,h}^k)) = \sigma_{w} ({\rm ln}(S_{w,h}^k) +1), \\
		&\sigma_{w} (H^1_{S_{w,h}^k}(S_{w,\vec{\alpha}_1}^{\star}) ) \ge  \sigma_{w} H^1_{S_{w,h}^k} (\bar{S}_{w,h}^0 ),\\
		& \sigma_{n} (H^1_{1-S_{w,h}^k}(1-S_{w,\vec{\alpha}_0}^{\star})) =  \sigma_{n} (H^1_{1-S_{w,h}^k}(1-\delta))\ge \sigma_{n} (H^1_{1-S_{w,h}^k}(\frac{1}{4})) ,\\ 
		&\sigma_{n} (H^1_{1-S_{w,h}^k}(1-S_{w,\vec{\alpha}_1}^{\star})) \le \sigma_{n} (H^1_{1-S_{w,h}^k}(1-\bar{S}_{w,h}^0 )).
	\end{align}
	A careful simplification reveals that 
	\begin{align}
		&\quad \sigma_{w} (H^1_{S_{w,h}^k}(S_{w,\vec{\alpha}_0}^{\star}) - H^1_{S_{w,h}^k}(S_{w,\vec{\alpha}_1}^{\star}) ) \nonumber\\
		&\le  \sigma_{w} ({\rm ln}(S_{w,h}^k) +1-H^1_{S_{w,h}^k} (\bar{S}_{w,h}^0 ) ,\\
		&\quad  - \sigma_{n} (H^1_{1-S_{w,h}^k}(1-S_{w,\vec{\alpha}_0}^{\star}) -  H^1_{1-S_{w,h}^k}(1-S_{w,\vec{\alpha}_1}^{\star} )  ) \nonumber\\
		&\le    \sigma_{n} (-H^1_{1-S_{w,h}^k}(\frac{1}{4}) + H^1_{1-S_{w,h}^k}(1-\bar{S}_{w,h}^0 )   ),
	\end{align}
	and
	\begin{align}
		&\left  | f_h^k \right | = \left | -\sigma_{w}+\sigma_{n}+ \sigma_{wn}(1-S_{w,h}^k) + \Delta t (- {\rm ln} (S_{w,h}^k) + {\rm ln} (1-S_{w,h}^k))  \right | \nonumber\\
		&\quad\quad \le \sigma_{w}+\sigma_{n} + 2\sigma_{wn} + 2\Delta t \left| {\rm ln} \delta_0 \right |.
	\end{align} 
	This in turn gives 
	\begin{align}\label{eqf}
		-2( \sigma_{w}+\sigma_{n} + 2\sigma_{wn} + 2\Delta t \left| {\rm ln} \delta_0 \right | ) &\le 	(f^k_{h,\vec{\alpha}_0}- f^k_{h,\vec{\alpha}_1} )  \nonumber\\
		&\le  2( \sigma_{w}+\sigma_{n} + 2\sigma_{wn} + 2\Delta t \left | {\rm ln} \delta_0 \right | ).
	\end{align}
	As a consequence, a substitution of \eqref{eqwm2}-\eqref{eqf} into \eqref{dsf} yields the following directional derivative
	\begin{align}
		\frac{1}{h^3}d_s\mathcal{F}^k(\varphi^{\star}+s\psi)|_{s=0} &\le \Delta t( {\rm ln} \frac{\delta}{ 1-\delta } - {\rm ln} \frac{\bar{S}_{w,h}^0}{ 1-\bar{S}_{w,h}^0 } ) +2 ( \sigma_{w}+\sigma_{n} + 2\sigma_{wn} + 2\Delta t \left | {\rm ln} \delta_0 \right | ) \nonumber\\
		&+\sigma_{w} ({\rm ln}(S_{w,h}^k) +1-H^1_{S_{w,h}^k} (\bar{S}_{w,h}^0 ) ) +\sigma_{wn} (1-2\delta) +  4C_2 \Delta t^{-1}  \nonumber\\
		& + \sigma_{n} (-H^1_{1-{S_{w,h}^k}}(\frac{1}{4}) + H^1_{1-S_{w,h}^k}(1-\bar{S}_{w,h}^0 )   ) .
	\end{align}
	Denote
	\begin{align}
		C_4&= \sigma_{w} ({\rm ln}(S_{w,h}^k) +1-H^1_{S_{w,h}^k} (\bar{S}_{w,h}^0 ) ) 
		+  \sigma_{n} (-H^1_{1-S_{w,h}^k}(\frac{1}{4}) + H^1_{1-S_{w,h}^k}(1-\bar{S}_{w,h}^0 )   )\nonumber \\
		&+2( \sigma_{w}+\sigma_{n} + 2\sigma_{wn} + 2\Delta t \left |{ \rm ln} \delta_0 \right | ) +  4C_2 \Delta t^{-1} ,\nonumber
	\end{align}
	a constant that will, in general, depend on $\Delta t$, $\delta_0$ and $ \bar{S}_{w,h}^0 $. We may choose $\delta$ sufficiently small so that
	\begin{align}\label{delta_re1}
		\frac{1}{h^3}d_s\mathcal{F}^k(\varphi^{\star}+s\psi)|_{s=0} \le \Delta t( {\rm ln} \frac{\delta}{ 1-\delta } - {\rm ln} \frac{\bar{S}_{w,h}^0}{ 1-\bar{S}_{w,h}^0 } )+\sigma_{wn} (1-2\delta) +C_4 <0.
	\end{align}   
	In turn, the following inequality becomes valid, provided that $\delta$ satisfies \eqref{delta_re1}: 
	\begin{align} 
		\frac{1}{h^3}d_s\mathcal{F}^k(\varphi^{\star}+s\psi)|_{s=0} < 0.
	\end{align}
	This fact contradicts the assumption that $\mathcal{F}^k$ reaches its minimum at $\varphi^{\star}$, since the directional derivative is negative. Using similar arguments, the global minimum of $\mathcal{F}^k$ on $\Omega^0_{H,\delta}$ cannot occur at the boundary point $\varphi^{\star}$ such that $\varphi^{\star}_{\vec{\alpha}_0} + \bar{S}_{w,h}^0 = 1-\delta$ for some $\vec{\alpha}_0$, either. A combination of these two facts reveals that the global minimum of $\mathcal{F}^k$ over $\Omega^0_{H,\delta}$ can only possibly occur at an interior point, for sufficiently small $\delta>0$. Then we conclude that there must exist a solution $S_w=\varphi+\bar{S}_{w,h}^0  \in \Omega_{H}$ that minimizes $\mathcal{J}^k$. The existence of the numerical solution is confirmed. Furthermore, since $\mathcal{J}^k$ is a strictly convex function over $\Omega_{H}$, the uniqueness of this numerical solution $S_w$ follows from a straightforward monotonicity argument. Finally, due to the constraint $S_{n,h}^{k+1}=1-S_{w,h}^{k+1}$, it is clear that $S_{n,h}^{k+1} \in (0,1)$. The proof of Theorem \ref{thmsec} is complete.}
	\end{proof}
	
	\section{Mass conservation and energy stability}\label{section4}
	In this section, we rigorously prove that the proposed schemes satisfy both the original energy dissipation law and local mass conservation. 
	
	{\color{black} By \eqref{first_scheme1}-\eqref{first_scheme2} and \eqref{sec_scheme1}-\eqref{sec_scheme2}, it is obvious that the first- and second-order schemes ensure the local mass conservation law.}
	
	\begin{theorem}
Both the fully discrete first-order scheme \eqref{refirst_w}-\eqref{refirst_n} and second-order scheme \eqref{resecond_w}-\eqref{resecond_n} are locally mass-conservative with respect to the wetting phase $S_w$ and non-wetting phase $S_n$.

\end{theorem}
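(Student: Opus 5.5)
The plan is to show that the decoupled implementations are algebraically equivalent to the original coupled schemes \eqref{first_scheme1}--\eqref{first_scheme5} and \eqref{sec_scheme1}--\eqref{sec_scheme5}. Each of those already has a discrete divergence (flux) form, from which local conservation follows cell by cell. For the first-order scheme, I take the solution $S_{w,h}^{k+1}$ of \eqref{refirst_w}, define $p_h^{k+1}$ by \eqref{refirst_p} and $S_{n,h}^{k+1}$ by \eqref{refirst_n}. Applying $\mathcal{L}_{\check{M}_{w,1}}$ to \eqref{refirst_p} and multiplying by $\phi_h$ recovers \eqref{first_scheme1}, up to the mean-zero convention for $\mathcal{L}^{-1}$; that convention is harmless because $\bar S_{w,h}^{k+1}=\bar S_{w,h}^{k}$. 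Subtracting \eqref{refirst_p} from \eqref{refirst_w} gives
\[
p_h^{k+1}+\mu_{n,h}^{k+1}-\frac{1}{\Delta t}\mathcal{L}^{-1}_{\check{M}_{n,1}}(S_{w,h}^{k+1}-S_{w,h}^k)=0 .
\]
Since $S_{n,h}^{k+1}-S_{n,h}^k=-(S_{w,h}^{k+1}-S_{w,h}^k)$, applying $\phi_h\mathcal{L}_{\check{M}_{n,1}}$ to this identity yields \eqref{first_scheme2}. The second-order case is handled identically, using \eqref{resecond_w}, \eqref{resecond_p} and \eqref{resecond_n}.

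Next I write each phase equation in cell-centered flux form. Define the discrete Darcy fluxes $\mathbf{u}_{\alpha,h}^{k+1}:=-\check{M}_{\alpha,1}\nabla_h(p_h^{k+1}+\mu_{\alpha,h}^{k+1})$, located on the staggered faces in $\mathcal{U}_h\times\mathcal{V}_h$. Then, at each cell $(x_{i+\frac12},y_{j+\frac12})$,
\[
\phi_h\frac{S_{\alpha,h}^{k+1}-S_{\alpha,h}^k}{\Delta t}+\frac{u_{\alpha,i+1,j+\frac12}-u_{\alpha,i,j+\frac12}}{h}+\frac{v_{\alpha,i+\frac12,j+1}-v_{\alpha,i+\frac12,j}}{h}=0 .
\]
This says the change of phase-$\alpha$ mass in the cell equals the net flux through its four faces. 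Each face flux is single-valued and shared by the two adjacent cells with opposite sign, which is exactly local mass conservation. With $q_\alpha\neq0$ a source term simply appears on the right-hand side. For the second-order scheme the fluxes are built from $\check{M}_{\alpha,2}$ and $p_h^{k+\frac12}+\mu_{\alpha,h}^{k+\frac12}$.

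As a corollary, I multiply by $h^2$, sum over all cells, and use the no-flux boundary condition (the fluxes lie in $\mathcal{U}_h^0,\mathcal{V}_h^0$, equivalently the summation-by-parts identities \eqref{4.1a}--\eqref{4.1b} with $c\equiv1$). This gives $(\phi_h S_{\alpha,h}^{k+1},1)_h=(\phi_h S_{\alpha,h}^k,1)_h$, i.e. global conservation for both phases.

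The main point needing care is the equivalence step: $\mathcal{L}^{-1}$ acts only on mean-zero data, and $p_h$ is determined only up to a constant. I would handle this by noting that solvability in Theorems \ref{thmfirst} and \ref{thmsec} was established on the mean-preserving set $\Omega_H$, so $S_{w,h}^{k+1}-S_{w,h}^k\in\mathcal{C}^0_h$; the weighted quantity, to which $\mathcal{L}^{-1}$ is actually applied, should be checked in the same way, and the mean of $\phi_h$-weighted data is preserved by the flux form. Any additive constant in $p_h$ disappears under $\nabla_h$, so the fluxes are well defined.
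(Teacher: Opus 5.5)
Your proposal is correct and follows the paper's route. The paper's entire argument is the observation that the coupled forms \eqref{first_scheme1}--\eqref{first_scheme2} and \eqref{sec_scheme1}--\eqref{sec_scheme2} are in discrete divergence (face-flux) form; you also verify explicitly the equivalence with the decoupled implementation, which the paper only asserts. The one loose spot is the compatibility condition: for non-constant $\phi_h$, $\mathcal{L}^{-1}_{\check{M}_{\alpha,1}}$ needs $(\phi_h(S_{w,h}^{k+1}-S_{w,h}^k),1)_h=0$, which the unweighted constraint in $\Omega_H$ does not give and which you cannot take from the flux form you are deriving without circularity, so it should be built into the definition of Step~1.
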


{\color{black} Next we define the discrete total energy as 	
\begin{equation} 
	\begin{aligned} 	
		& 
		\mathcal{E}_{tol,h}^{k}= ( \phi_h , F(S_{w,h}^k,S_{n,h}^k))_h, \quad \mbox{with} 
		\\
		& 
		F(S_{w,h}^k,S_{n,h}^k)=	\sum_{\alpha=w,n } \sigma _{\alpha }S^{k}_{\alpha,h }( {\rm ln} (S^{k}_{\alpha,h })-1)+   \sigma _{wn}S^{k}_{w,h} S^{k}_{n,h}.
	\end{aligned}
	\end{equation}}	
	The following original energy dissipation law is valid for the first-order scheme.
	
	\begin{theorem} \label{dis_energy dissipation}
Let $q_{\alpha}=0$ and the no-flow boundary condition $ \textbf{u}_{\alpha,1} \cdot \textbf{n} =0 $ are imposed, where  $\textbf{u}_{\alpha,1}=-\check{M}_{\alpha,1}  \nabla_h(p_h + \mu_{\alpha,h})$ and $\textbf{n} $ is the normal unit outward vector to $\partial \Omega$, we have the following discrete original energy dissipation law:
{\color{black}
	\begin{equation}\label{e_dis_energy1_noflow}
		\frac{ \mathcal{E}_{tol,h}^{k+1}  - \mathcal{E}_{tol,h}^{k} }{ \Delta t} \le -\sum_{\alpha =w,n} \Big\| (\frac{ ( S_{\alpha,h}^k )^m } {\eta_{\alpha}}  {K})^{\frac{1}{2}}   \nabla_h (p_h^{k+1}+\mu^{k+1}_{\alpha,h }) \Big\|_h^{2} \le 0 .
	\end{equation}
}
\end{theorem}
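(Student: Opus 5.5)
The plan is to test each of the two mass equations \eqref{first_scheme1} and \eqref{first_scheme2} with the corresponding total potential $p_h^{k+1}+\mu_{\alpha,h}^{k+1}$, sum over $\alpha=w,n$, and compare the resulting left-hand side with the increment of the discrete energy $\mathcal{E}_{tol,h}^{k}$ using convexity/concavity of the splitting. Concretely, take the discrete inner product of \eqref{first_scheme1} with $\Delta t(p_h^{k+1}+\mu_{w,h}^{k+1})$ and of \eqref{first_scheme2} with $\Delta t(p_h^{k+1}+\mu_{n,h}^{k+1})$. The no-flow boundary condition lets us apply the summation-by-parts formulas \eqref{4.1a}-\eqref{4.1b}. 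The diffusion terms then become $\Delta t\,\| \check{M}_{\alpha,1}^{1/2}\nabla_h(p_h^{k+1}+\mu_{\alpha,h}^{k+1})\|_h^2$, which gives exactly the dissipation appearing in \eqref{e_dis_energy1_noflow}.

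Next, the pressure contributions cancel. Adding the two tested equations, the terms involving $p_h^{k+1}$ combine into
\[
\bigl(\phi_h(S_{w,h}^{k+1}-S_{w,h}^k+S_{n,h}^{k+1}-S_{n,h}^k),p_h^{k+1}\bigr)_h .
\]
This vanishes, because \eqref{first_scheme5} holds at step $k+1$ and, by Theorem \ref{thmfirst} applied inductively (or from the initial data), also at step $k$. What remains on the left is
\[
\sum_\alpha\bigl(\phi_h(S_{\alpha,h}^{k+1}-S_{\alpha,h}^k),\mu_{\alpha,h}^{k+1}\bigr)_h .
\]
By Theorem \ref{thmfirst} all logarithms are well defined, since $0<S_{\alpha,h}^{k+1}<1$.

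The key step is the pointwise inequality
\[
F(S_w^{k+1},S_n^{k+1})-F(S_w^k,S_n^k)\le \sum_\alpha \mu_{\alpha,h}^{k+1}(S_\alpha^{k+1}-S_\alpha^k).
\]
I would split $F$ into the convex part $G(s)=s(\ln s-1)$ and the coupling $\sigma_{wn}S_wS_n$.
\begin{itemize}
\item For the convex part, $G(b)-G(a)\le G'(b)(b-a)=\ln b\,(b-a)$, which matches the implicit logarithmic terms of \eqref{first_scheme3}-\eqref{first_scheme4}.
\item For the coupling term, a direct expansion gives
\[
S_w^{k+1}S_n^{k+1}-S_w^kS_n^k-S_n^k(S_w^{k+1}-S_w^k)-S_w^k(S_n^{k+1}-S_n^k)=(S_w^{k+1}-S_w^k)(S_n^{k+1}-S_n^k).
\]
By the constraint this equals $-(S_w^{k+1}-S_w^k)^2\le0$.
\end{itemize}
So the explicit treatment of the coupling term is energy stable precisely because the constraint makes $S_wS_n$ concave along the admissible line. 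Multiplying by $\phi_h>0$, summing over the grid and combining with the tested equations yields \eqref{e_dis_energy1_noflow} after division by $\Delta t$; the extra nonpositive terms are dropped, which is why the statement has $\le$.

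The main obstacle I expect is not the algebra but checking that the inner products and boundary handling are consistent. The mobility $\check{M}_{\alpha,1}$ lives on staggered edges and must be nonnegative, which holds since $S^k_{\alpha,h}\in(0,1)$. The discrete fluxes must lie in $\mathcal{U}_h^0,\mathcal{V}_h^0$ so that summation by parts produces no boundary terms. The weighted norm in the statement must also be identified with the edge-averaged $\Pi_x,\Pi_y$ mobility. Once these are verified, the convex-concave inequality closes the argument.
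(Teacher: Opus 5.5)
Your proposal is correct and follows essentially the same route as the paper. Both arguments rest on three steps:
\begin{itemize}
\item the pointwise bound $F(S^{k+1}_{w,h},S^{k+1}_{n,h})-F(S^{k}_{w,h},S^{k}_{n,h})\le\sum_{\alpha}\mu_{\alpha,h}^{k+1}(S_{\alpha,h}^{k+1}-S_{\alpha,h}^{k})$, obtained from convexity of $s\ln s$ together with the explicit treatment of the coupling term;
\item insertion of $p_h^{k+1}$, which costs nothing because the saturation constraint holds at both time levels;
\item summation by parts against \eqref{first_scheme1}--\eqref{first_scheme2}.
\end{itemize}
Your identity $(S_{w,h}^{k+1}-S_{w,h}^{k})(S_{n,h}^{k+1}-S_{n,h}^{k})=-(S_{w,h}^{k+1}-S_{w,h}^{k})^2$ also supplies the justification for the coupling inequality \eqref{con_eq2}. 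The paper only asserts that inequality, and it actually depends on the constraint rather than on convexity.
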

\begin{proof}
Because of the convexity of $S_{\alpha} {\rm ln}(S_{\alpha})$, $\alpha=w,n$, we see that
\begin{align}
	&S^{k+1}_{\alpha,h} {\rm ln}S_{\alpha,h}^{k+1} - S^{k}_{\alpha,h} {\rm ln}S_{\alpha,h}^{k}   \le  (S_{\alpha,h}^{k+1} - S_{\alpha,h}^k) ({\rm ln} S_{\alpha,h}^{k+1} + 1),\label{con_eq}\\
	&S^{k+1}_{w,h}S^{k+1}_{n,h}- S^{k}_{w,h}S^{k}_{n,h} \le  S_{n,h}^k (S_{w,h}^{k+1} - S_{w,h}^k) + S_{w,h}^{k}(S_{n,h}^{k+1} - S_{n,h}^k).\label{con_eq2}
\end{align}
In turn, the following inequality becomes available:  
\begin{align} \label{dis_binary fluids}
	F(S^{k+1}_{w,h},S^{k+1}_{n,h})- F(S^{k}_{w,h},S^{k}_{n,h}) &= \sum_{\alpha=w,n } \sigma _{\alpha }S^{k+1}_{\alpha,h }( {\rm ln} (S^{k+1}_{\alpha,h })-1)+   \sigma _{wn}S^{k+1}_{w,h} S^{k+1}_{n,h} \nonumber \\  
	&- \left(  \sum_{\alpha=w,n } \sigma _{\alpha }S^{k}_{\alpha,h }( {\rm ln} (S^{k}_{\alpha,h })-1)+   \sigma _{wn}S^{k}_{w,h} S^{k}_{n,h}  \right)  \\
	& \le 	\mu_{w,h}^{k+1} (S_{w,h}^{k+1} - S_{w,h}^k) + \mu_{n,h}^{k+1} (S_{n,h}^{k+1} - S_{n,h}^k). \nonumber
\end{align}
As a consequence, a combination of \eqref{dis_binary fluids},  \eqref{first_scheme1}, \eqref{first_scheme2} leads to 
\begin{equation*}
	\aligned
	\frac{  \mathcal{E}^{k+1}_{tol,h} - \mathcal{E}^{k}_{tol,h}   }{\Delta t} = & \Big(\phi_h, \frac{ F(S^{k+1}_{w,h},S^{k+1}_{n,h})- F(S^{k}_{w,h},S^{k}_{n,h}) }{\Delta t} \Big)_h \\
	\le & \Big(\phi_h,  \mu^{k+1}_{w,h}\frac{ S_{w,h}^{k+1} - S_{w,h}^k }{\Delta t}+ \mu^{k+1}_{n,h} \frac{ S_{n,h}^{k+1} - S_{n,h}^k  }{\Delta t} \Big)_h  \\
	= & \Big( \phi_h,  (p_h^{k+1}+\mu^{k+1}_{w,h})\frac{S_{w,h}^{k+1} - S_{w,h}^k }{\Delta t}+ (p_h^{k+1}+\mu^{k+1}_{n,h})\frac{S_{n,h}^{k+1} - S_{n,h}^k }{\Delta t} \Big)_h\\
	=&  -\sum_{\alpha =w,n} \Big\| (\frac{ ( S_{\alpha,h}^k )^m } {\eta_{\alpha}}  {K})^{\frac{1}{2}}   \nabla_h (p_h^{k+1}+\mu^{k+1}_{\alpha,h }) \Big\|_h^{2}  \le 0 , 
	\endaligned
\end{equation*}
in which the summation by parts formulas have been applied. The desired result \eqref{e_dis_energy1_noflow} has been proved.
\end{proof}

A similar estimate also holds true for the second-order scheme. 

\begin{theorem}\label{thm_energy2}
Let $q_{\alpha}=0$ and the no-flow boundary condition $ \textbf{u}_{\alpha,2} \cdot \textbf{n} =0 $ are imposed, where  $\textbf{u}_{\alpha,2}=-\check{M}_{\alpha,2}  \nabla_h(p_h + \mu_{\alpha,h})$ and $ \textbf{n} $ is the normal unit outward vector to $\partial \Omega$. The following discrete original energy dissipation law is valid: 
\begin{equation}\label{e_dis_energy2_noflow}
	\frac{ \mathcal{E}_{tol,h}^{k+1}  - \mathcal{E}_{tol,h}^{k} }{ \Delta t} \le  -\sum_{\alpha =w,n} \Big\|  (\frac{( \tilde{S}_{\alpha,h}^{k+1} )^m}{\eta_{\alpha}} {K})^{\frac{1}{2}} \nabla_h (p_h^{k+\frac{1}{2}}+\mu^{k+\frac{1}{2}}_{\alpha,h }) \Big\|_h ^{2} \le 0 .
\end{equation}
\end{theorem}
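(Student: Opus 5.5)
We mimic the proof of Theorem \ref{dis_energy dissipation}. The plan is to bound the pointwise increment of the free energy density $F$ by $\mu_{w,h}^{k+\frac12}(S_{w,h}^{k+1}-S_{w,h}^k)+\mu_{n,h}^{k+\frac12}(S_{n,h}^{k+1}-S_{n,h}^k)$, and then use the scheme \eqref{sec_scheme1}--\eqref{sec_scheme2} together with summation by parts.

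First consider the logarithmic part. By the definition of $\mathcal{H}$,
\begin{equation*}
\sigma_\alpha\big(\mathcal{H}(S_{\alpha,h}^{k+1},S_{\alpha,h}^k)-1\big)(S_{\alpha,h}^{k+1}-S_{\alpha,h}^k)
=\sigma_\alpha\big[S^{k+1}_{\alpha,h}(\ln S^{k+1}_{\alpha,h}-1)-S^{k}_{\alpha,h}(\ln S^{k}_{\alpha,h}-1)\big].
\end{equation*}
This identity is exact, and it also holds trivially in the degenerate case $S^{k+1}_{\alpha,h}=S^k_{\alpha,h}$ treated in the remark. The stabilizing term $\Delta t(\ln S^{k+1}_{\alpha,h}-\ln S^k_{\alpha,h})(S^{k+1}_{\alpha,h}-S^k_{\alpha,h})$ is nonnegative because $\ln$ is monotone, so it can only help. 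Theorem \ref{thmsec} guarantees $0<S_{\alpha,h}^{k+1}<1$, so all logarithms are well defined.

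Next consider the interaction term. I read $S_{\alpha,h}^{k+\frac12}$ as $\frac12(S_{\alpha,h}^{k+1}+S_{\alpha,h}^k)$ and use $S_{n,h}^{j}=1-S_{w,h}^{j}$. A direct computation then gives
\begin{equation*}
S^{k+1}_{w,h}S^{k+1}_{n,h}-S^{k}_{w,h}S^{k}_{n,h}=S_{n,h}^{k+\frac12}(S^{k+1}_{w,h}-S^{k}_{w,h})+S_{w,h}^{k+\frac12}(S^{k+1}_{n,h}-S^{k}_{n,h}).
\end{equation*}
This is the polarization identity $ab-cd=\frac{b+d}{2}(a-c)+\frac{a+c}{2}(b-d)$, and it is exact. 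Summing the two pieces gives, pointwise,
\begin{equation*}
F(S^{k+1}_{w,h},S^{k+1}_{n,h})-F(S^{k}_{w,h},S^{k}_{n,h})\le \sum_{\alpha=w,n}\mu_{\alpha,h}^{k+\frac12}(S^{k+1}_{\alpha,h}-S^k_{\alpha,h}).
\end{equation*}

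Now multiply this inequality by $\phi_h/\Delta t$ and take the $(\cdot,\cdot)_h$ inner product. Because $S^{k+1}_{w,h}-S^k_{w,h}+S^{k+1}_{n,h}-S^k_{n,h}=0$, we may add $p_h^{k+\frac12}$ to each chemical potential at no cost. Substituting \eqref{sec_scheme1}--\eqref{sec_scheme2} and applying the summation-by-parts formulas under the no-flow condition turns $\big(p_h^{k+\frac12}+\mu_{\alpha,h}^{k+\frac12},\nabla_h\cdot(\check M_{\alpha,2}\nabla_h(p_h^{k+\frac12}+\mu_{\alpha,h}^{k+\frac12}))\big)_h$ into $-\|\check M_{\alpha,2}^{1/2}\nabla_h(p_h^{k+\frac12}+\mu_{\alpha,h}^{k+\frac12})\|_h^2$. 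This yields \eqref{e_dis_energy2_noflow}.

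The main obstacle is nonnegativity of the mobility. The extrapolation $\tilde S^{k+1}_{\alpha,h}=\frac32S^k_{\alpha,h}-\frac12S^{k-1}_{\alpha,h}$ may be negative, so $(\tilde S^{k+1}_{\alpha,h})^m$ could be undefined or negative. I would handle this with the regularized mobility from the remark, $(\Pi^2(\cdot)+\Delta t^6)^{1/2}>0$, or by interpreting $(\tilde S)^m$ through a positive part. Under either reading the weighted norm is a genuine nonnegative quadratic form and the final $\le 0$ holds. The remaining steps are the exact algebraic identities above, so I expect no further difficulty.
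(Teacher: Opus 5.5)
Your proposal is correct and follows essentially the same route as the paper: the exact $\mathcal{H}$-identity for the logarithmic part, the exact midpoint identity \eqref{swsn_eq2} for the interaction term, dropping the nonnegative $\Delta t$ stabilization, and then inserting \eqref{sec_scheme1}--\eqref{sec_scheme2} together with summation by parts. The differences are minor. You show that each term $\Delta t(\ln S^{k+1}_{\alpha,h}-\ln S^k_{\alpha,h})(S^{k+1}_{\alpha,h}-S^k_{\alpha,h})$ is nonnegative on its own, whereas the paper combines the two terms via the constraint and the monotonicity of $\ln x-\ln(1-x)$. You also explicitly flag the possible sign problem of $(\tilde S^{k+1}_{\alpha,h})^m$, which the paper passes over.
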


\begin{proof}
Taking \color{black} a discrete inner product with \eqref{resecond_muw} by $S_{w,h}^{k+1} - S_{w,h}^k$, and with \eqref{resecond_mun} by $ S_{n,h}^{k+1} - S_{n,h}^k $ respectively,  we get
\begin{align}\label{swmu2}
	(	S_{w}^{k+1} - S_{w}^k , \mu_{w,h}^{k+\frac{1}{2}})_h = ( \sigma_{w} ( { S_{w,h}^{k+1}{\rm ln} S_{w,h}^{k+1}- S_{w,h}^{k}{\rm ln} S_{w,h}^{k}}  - (S_{w,h}^{k+1} - S_{w,h}^k) )\\
	\quad\quad\quad\quad \quad\quad+ \sigma_{wn} S_{n,h}^{k+\frac{1}{2}}(S_{w,h}^{k+1} - S_{w_h}^k) + \Delta t ({\rm ln} S_{w,h}^{k+1} -{ \rm ln} S_{w,h}^{k})(S_{w,h}^{k+1} - S_{w,h}^k),1)_h ,\nonumber \\
	(S_{n,h}^{k+1} - S_{n,h}^k , \mu_{n,h}^{k+\frac{1}{2}})_h
	= (\sigma_{n} ( { S_{n,h}^{k+1}{\rm ln} S_{n,h}^{k+1}- S_n^{k}{\rm ln} S_{n,h}^{k}} -(S_{n,h}^{k+1} - S_{n,h}^k) ) \\
	\quad\quad \quad\quad\quad\quad+ \sigma_{wn} S_{w,h}^{k+\frac{1}{2}}(S_{n,h}^{k+1} - S_{n,h}^k) + \Delta t ({\rm ln} S_{n,h}^{k+1} - {\rm ln} S_{n,h}^{k})(S_{n,h}^{k+1} - S_{n,h}^k) , 1)_h.\nonumber
\end{align}
Meanwhile, the following identity is observed: 
\begin{align}\label{swsn_eq2}
	S^{k+1}_{w,h}S^{k+1}_{n,h}- S^{k}_{w,h}S^{k}_{n,h} =  S_{n,h}^{k+\frac{1}{2}} (S_{w,h}^{k+1} - S_{w,h}^k) + S_{w,h}^{k+\frac{1}{2}}(S_{n,h}^{k+1} - S_{n,h}^k).
\end{align}
In turn, a combination of \eqref{swmu2}-\eqref{swsn_eq2} results in
\begin{align*} 
	& F(S^{k+1}_{w,h},S^{k+1}_{n,h})- F(S^{k}_{w,h},S^{k}_{n,h}) = \sum_{\alpha=w,n } \sigma _{\alpha }S^{k+1}_{\alpha,h }( {\rm ln} (S^{k+1}_{\alpha,h })-1)+   \sigma _{wn}S^{k+1}_{w,h} S^{k+1}_{n,h}  \nonumber\\  
	&- \Big(  \sum_{\alpha=w,n } \sigma _{\alpha }S^{k}_{\alpha,h }( {\rm ln} (S^{k}_{\alpha,h })-1)+   \sigma _{wn}S^{k}_{w,h} S^{k}_{n,h}  \Big)  \nonumber\\
	& = 	\mu_{w,h}^{k+\frac{1}{2}} (S_{w,h}^{k+1} - S_{w,h}^k) + \mu_{n,h}^{k+\frac{1}{2}} (S_{n,h}^{k+1} - S_{n,h}^k) \nonumber\\
	&- \Delta t \left( ({\rm ln} S_{w,h}^{k+1} - {\rm ln} S_{w,h}^{k})(S_{w,h}^{k+1} - S_{w,h}^k) + ({\rm ln} S_{n,h}^{k+1} - {\rm ln} S_{n,h}^{k})(S_{n,h}^{k+1} - S_{n,h}^k) \right).
\end{align*}
Making use of the monotonicity of ${\rm ln}(x)-{\rm ln}(1-x)$ and the saturation constraint $S_{n,h}^{k+1} = 1 - S_{w,h}^{k+1}$, we see that 
\begin{align*}
	& ({\rm ln} S_{w,h}^{k+1} - {\rm ln} S_w^{k})(S_{w,h}^{k+1} - S_{w,h}^k) + ({\rm ln }S_{n,h}^{k+1} - {\rm ln} S_{n,h}^{k})(S_{n,h}^{k+1} - S_{n,h}^k)\nonumber\\
	=&({\rm ln} S_{w,h}^{k+1} - {\rm ln} S_{w,h}^{k} - {\rm ln} (1-S_{w,h}^{k+1}) + {\rm ln} (1-S_{w,h}^{k}) ) (S_{w,h}^{k+1} - S_{w,h}^k) \ge 0.
\end{align*}
Subsequently, the following inequality is obtained:  
\begin{align*}
	F(S^{k+1}_{w,h},S^{k+1}_{n,h})- F(S^{k}_{w,h},S^{k}_{n,h}) \le 	\mu_{w,h}^{k+\frac{1}{2}} (S_{w,h}^{k+1} - S_{w,h}^k) + \mu_{n,h}^{k+\frac{1}{2}} (S_{n,h}^{k+1} - S_{n,h}^k) .
\end{align*}
Finally, a combination of \eqref{sec_scheme1}, \eqref{sec_scheme4} and summation by parts formulas yields the following energy estimate:
{\color{black}
	\begin{equation*}
		\aligned
		\frac{  \mathcal{E}^{k+1}_{tol,h} - \mathcal{E}^{k}_{tol,h}   }{\Delta t} = &  \Big(\phi_h, \frac{ F(S^{k+1}_{w,h},S^{k+1}_{n,h})- F(S^{k}_{w,h},S^{k}_{n,h}) }{\Delta t} \Big)_h \\
		\le &  \Big( \phi_h, \mu^{k+\frac{1}{2}}_{w,h}\frac{ S_{w,h}^{k+1} - S_{w,h}^k }{\Delta t}+ \mu^{k+\frac{1}{2}}_{n,h} \frac{ S_{n,h}^{k+1} - S_{n,h}^k  }{\Delta t}  \Big)_h \\
		= & \Big(\phi_h, (p_h^{k+\frac{1}{2}}+\mu^{k+\frac{1}{2}}_{w,h})\frac{S_{w,h}^{k+1} - S_{w,h}^k }{\Delta t}+ (p_h^{k+\frac{1}{2}}+\mu^{k+\frac{1}{2}}_{n,h})\frac{S_{n,h}^{k+1} - S_{n,h}^k }{\Delta t} \Big)_h \\
		=& -\sum_{\alpha =w,n} \Big\|  (\frac{( \tilde{S}_{\alpha,h}^{k+1} )^m}{\eta_{\alpha}} {K})^{\frac{1}{2}} \nabla_h (p_h^{k+\frac{1}{2}}+\mu^{k+\frac{1}{2}}_{\alpha,h }) \Big\|_h ^{2} \le 0 . 
		\endaligned
	\end{equation*}
}
This completes the proof of Theorem \ref{thm_energy2}.
\end{proof}

\section{Error estimate}\label{section5}
For simplicity of presentation, we consider the first-order scheme \eqref{refirst_w}-\eqref{refirst_n} with $\check{M}_{\alpha,1}=1$, $\alpha=w,n$ and $\phi_h=1$. The convergence analysis for the non-constant mobility case will be considered in future works. Let $(S_w, S_n, p)$ be the exact solution of the PDE system~\eqref{e_modela}-\eqref{e_modelc}. With sufficiently regular initial data, it is reasonable to assume that the exact solution has regularity of class $\mathcal{R}$, where
\begin{equation}
S_\alpha \in W^{2,\infty}(0,T; L^{2}(\Omega)) \cap L^{\infty}(0,T; W^{4,\infty}(\Omega)) , 
\quad  p \in L^{\infty}(0,T;W^{4,\infty}(\Omega)) . 
\label{assumption:regularity.1}
\end{equation}
Meanwhile, we introduce  $S_{\alpha, N} (\, \cdot \, ,t) := {\mathcal{P}}_N S_\alpha (\, \cdot \, ,t)$, $p_N (\, \cdot \, ,t) := {\mathcal{P}}_N p (\, \cdot \, ,t)$, $\alpha = w, n$, the (spatial) Fourier Cosine projection of the exact solution into ${\mathcal {B}}^K$, the space of Cosine polynomials (to satisfy the Neumann boundary condition) of degree up to and including $K$ (with $N=2K +1$).  The following projection approximation is standard: if $S_\alpha, p \in L^\infty(0,T;H^\ell (\Omega))$, for any $\ell\in\mathbb{N}$ with $0 \le k \le \ell$, 
\begin{equation}
\begin{aligned} 
	& 
	\| S_{\alpha, N} - S_\alpha \|_{L^\infty(0,T;H^k)} \| \le C h^{\ell-k} \| S_\alpha \|_{L^\infty(0,T;H^\ell)}, 
	\\  
	& 
	\| p_N - p \|_{L^\infty(0,T;H^k)} \le C h^{\ell-k} \| p \|_{L^\infty(0,T;H^\ell)} . 
\end{aligned} 
\label{projection-est-0}
\end{equation}
In particular, we observe that $S_{w, N} + S_{n, N} \equiv 1$, which comes the Fourier Cosine projection of the saturation constraint identity $S_w + S_n \equiv 1$. 

By $S_{\alpha, N}^m$, $p_N^m$ we denote $S_{\alpha, N} ( \, \cdot \,, t^m)$ and $p_N (\, \cdot \, , t^m)$, respectively, with $t^m = m \cdot \Delta t$. Since $S_{\alpha, N} \in {\mathcal{B}}^K$, the mass conservative property is available at the discrete level:
\begin{equation} 
\overline{S}_{\alpha, N}^m = \frac{1}{|\Omega|}\int_\Omega \,S_{\alpha, N} (\, \cdot \,, t^m) \, d {\bf x} = \frac{1}{|\Omega|}\int_\Omega \, S_{\alpha, N} ( \, \cdot \,, t^{m-1}) \, d {\bf x} = \overline{S}_{\alpha, N}^{m-1} ,  \, \, \, 
\alpha = w, n , \label{mass conserv-1-1} 
\end{equation}
for any $m \in \mathbb{N}$. On the other hand, the numerical solution of $S_\alpha$ in \eqref{first_scheme1}-\eqref{first_scheme5} is also mass conservative at the discrete level:
\begin{equation}
\overline{S}_{\alpha, h}^m = \overline{S}_{\alpha, h}^{m-1} ,  \, \, \,  \alpha = w, n  , 
\quad \forall \ m \in \mathbb{N} . 
\label{mass conserv-2}
\end{equation}
Of course, we could use the mass conservative projection for the initial data:  
\begin{equation}
(S_{\alpha, h}^0)_{i,j,k} := S_{\alpha, N} (p_i, p_j, p_k, t=0) , \quad \alpha = w, n . 
\label{initial data-0}
\end{equation}	
The error grid function is defined as
\begin{equation}
e_{\alpha}^k =S_{\alpha, N}^k -S_{\alpha,h}^k, \, \, \, e_p^k=p_N^k -p^k_h, \, \, \, 
\quad \forall k \in \mathbb{N} .
\label{error function-1}
\end{equation}
Therefore, it follows that  $\overline{e}_w^k = \overline{e}_n^k =0$, for any $k \in \mathbb{N}$,  so that the discrete norm $\| \, \cdot \, \|_{-1,h}$ is well defined for the error grid function.

The error estimate for the first-order scheme is stated as follows.
\medskip
\begin{theorem}\label{therror}
Set $ \gamma_0 := (\sqrt{\sigma_{w}}+\sqrt{\sigma_{n}})^2 - 2\sigma_{wn} > 0$. We make a regularity assumption~\eqref{assumption:regularity.1} for the exact solution. Provided that $\Delta t$ and $h$ are sufficiently small, the following convergence estimate is valid for the first-order scheme \eqref{refirst_w}-\eqref{refirst_n}: 
\begin{align}
	&\| e_\alpha^{m+1}\|_{-1,h}^{2} +\Delta t \sum_{k=0}^{m}  \| e_\alpha^{k+1} \|_h^2 
	\le   C(\Delta t^2+h^4 ), \quad \alpha = w , n , \label{error1} 
\end{align}
where \text{C} is a constant independent of $\Delta t$ and $h$.
\end{theorem}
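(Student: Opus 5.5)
The plan is a direct energy argument on the reduced (decoupled) form of the scheme, tested in the discrete $H^{-1}$ framework. No Gronwall inequality should be needed, because the logarithmic part supplies enough coercivity; that is exactly what the hypothesis $\gamma_0>0$ encodes.

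\textbf{Consistency.} With $\check{M}_{\alpha,1}=1$ and $\phi_h=1$, I first write the equation satisfied by the projected exact solution $(S_{w,N},S_{n,N},p_N)$ in the same form as the scheme \eqref{first_scheme1}--\eqref{first_scheme5}:
\begin{align*}
\frac{S_{\alpha,N}^{k+1}-S_{\alpha,N}^k}{\Delta t}-\Delta_h\big(p_N^{k+1}+\sigma_\alpha\ln S_{\alpha,N}^{k+1}+\sigma_{wn}S_{\beta,N}^{k}\big)=\tau_\alpha^{k+1},
\end{align*}
where $\beta\ne\alpha$. Using the regularity class $\mathcal R$ in \eqref{assumption:regularity.1} and the projection estimate \eqref{projection-est-0}, I expect $\|\tau_\alpha^{k+1}\|_h\le C(\Delta t+h^2)$. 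This requires the exact saturations to stay in a compact subinterval of $(0,1)$, so that the logarithm and its derivatives are bounded on the range of $S_{\alpha,N}$ for $h$ small. The explicit treatment of $\sigma_{wn}S_\beta^k$ contributes an $O(\Delta t)$ term. Since both the projected and the numerical solutions are mass conservative (\eqref{mass conserv-1-1}, \eqref{mass conserv-2}, \eqref{initial data-0}), the truncation errors have zero mean, and so does $\tau_w-\tau_n$.

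\textbf{Reduced error equation.} Exactly as in the derivation of \eqref{refirst_w}, I apply $(-\Delta_h)^{-1}$ to the $w$- and $n$-error equations and subtract; this eliminates $e_p$ up to an additive constant. Both $S_{w,N}+S_{n,N}\equiv1$ and $S_{w,h}+S_{n,h}\equiv1$ hold, so $e_n^k=-e_w^k$. Writing $e:=e_w$ and $G(x):=\sigma_w\ln x-\sigma_n\ln(1-x)$, this gives
\begin{align*}
\frac{2}{\Delta t}(-\Delta_h)^{-1}(e^{k+1}-e^k)+G(S_{w,N}^{k+1})-G(S_{w,h}^{k+1})-2\sigma_{wn}e^k=(-\Delta_h)^{-1}(\tau_w^{k+1}-\tau_n^{k+1})+c^{k+1},
\end{align*}
with $c^{k+1}$ a constant. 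I then take the $(\cdot,\cdot)_h$ inner product with $e^{k+1}\in\mathcal C_h^0$, so the constant drops out.
\begin{itemize}
\item The first term gives $\frac1{\Delta t}\big(\|e^{k+1}\|_{-1,h}^2-\|e^k\|_{-1,h}^2+\|e^{k+1}-e^k\|_{-1,h}^2\big)$.
\item For the logarithmic term, the key point is to use $G$ as a single function of $S_w$, rather than bounding the two logarithms separately. By the mean value theorem, $G(S_{w,N})-G(S_{w,h})=G'(\xi)e$ for some $\xi\in(0,1)$, with $G'(\xi)=\sigma_w/\xi+\sigma_n/(1-\xi)\ge(\sqrt{\sigma_w}+\sqrt{\sigma_n})^2$. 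This uses the pointwise bounds $0<S_{w,h}^{k+1}<1$ from Theorem \ref{thmfirst}, so the term is at least $(\sqrt{\sigma_w}+\sqrt{\sigma_n})^2\|e^{k+1}\|_h^2$.
\item For the explicit coupling term, $2\sigma_{wn}(e^k,e^{k+1})_h\le\sigma_{wn}(\|e^k\|_h^2+\|e^{k+1}\|_h^2)=2\sigma_{wn}\|e^{k+1}\|_h^2-\sigma_{wn}(\|e^{k+1}\|_h^2-\|e^k\|_h^2)$.
\end{itemize}
Together these produce the coercive quantity $\gamma_0\|e^{k+1}\|_h^2$, plus the telescoping term $\sigma_{wn}(\|e^{k+1}\|_h^2-\|e^k\|_h^2)$, which I keep on the left.

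\textbf{Summation.} The right-hand side is bounded by $\|\tau_w^{k+1}-\tau_n^{k+1}\|_{-1,h}\|e^{k+1}\|_{-1,h}\cdots$; more simply I use $\big((-\Delta_h)^{-1}(\tau_w-\tau_n),e^{k+1}\big)_h\le C\|\tau_w-\tau_n\|_h\|e^{k+1}\|_h\le\frac{\gamma_0}{2}\|e^{k+1}\|_h^2+C(\Delta t+h^2)^2$. I multiply by $\Delta t$ and sum over $k=0,\dots,m$. Since $e^0=0$ by \eqref{initial data-0}, all telescoping terms are nonnegative at the endpoint, which gives
\[
\|e^{m+1}\|_{-1,h}^2+\frac{\gamma_0}{2}\Delta t\sum_{k=0}^m\|e^{k+1}\|_h^2\le CT(\Delta t^2+h^4).
\]
The same bound holds for $e_n=-e_w$, which is \eqref{error1}.

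\textbf{Main obstacle.} I expect the main difficulty to be the consistency step. The truncation bound for the logarithmic chemical potential requires a uniform separation of $S_{\alpha,N}$ from $0$ and $1$, which must be inherited from the exact solution via the projection estimate (for sufficiently small $h$). One must also check that the pressure really cancels modulo constants, so that no estimate on $e_p$ is needed.
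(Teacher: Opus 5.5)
Your proposal is correct and follows essentially the paper's route. You eliminate the pressure by subtracting the two phase equations (applying $(-\Delta_h)^{-1}$ first is equivalent here, since $\check{M}_{\alpha,1}=1$), test with $(-\Delta_h)^{-1}e_w^{k+1}$, get coercivity from the mean value theorem with $\mathcal{G}'(\xi)\ge(\sqrt{\sigma_w}+\sqrt{\sigma_n})^2$, and apply Cauchy--Schwarz to the explicit $\sigma_{wn}$ term to produce $\gamma_0$.

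The only difference is how you close the estimate. The paper bounds $\langle\tau_0^k,e_w^{k+1}\rangle_{-1,h}\le\frac12(\|\tau_0^k\|_{-1,h}^2+\|e_w^{k+1}\|_{-1,h}^2)$ and then invokes a discrete Gronwall inequality, which is where its small-$\Delta t$ requirement comes from. You instead absorb the truncation term into $\gamma_0\|e_w^{k+1}\|_h^2$ and avoid Gronwall. This works even with the paper's weaker $H^{-1}$ truncation bound, since $\|e\|_{-1,h}\le C\|e\|_h$ for mean-zero $e$.
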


\begin{proof} 
An application of the Taylor expansion to~\eqref{e_modela}-\eqref{e_modelb} (with $q_{\alpha}=0$) implies that  
\begin{equation}
	\begin{aligned} 
		&
		\frac{S_{w, N}^{k+1}-S_{w, N}^k}{\Delta t} =\Delta_h ( p_N^{k+1} + \sigma_{w} {\rm ln}(S_{w, N}^{k+1} )  + \sigma_{wn} (1-S_{w, N}^k ) ) + \tau_w^k ,  
		\\
		& 
		\frac{S_{n, N}^{k+1}-S_{n, N}^k}{\Delta t} =\Delta_h ( p_N^{k+1} + \sigma_{n} {\rm ln}(S_{n, N}^{k+1} )  + \sigma_{wn} S_{w, N}^k ) + \tau_n^k , 
	\end{aligned} 
	\label{consistency-1} 
\end{equation} 
with $\| \tau_w^k \|_{-1, h} , \, \| \tau_n^k \|_{-1, h} \le C (\Delta t + h^2)$. In turn, a difference between these two equations yields 
\begin{equation}
	\frac{2 ( S_{w, N}^{k+1}-S_{w, N}^k)}{\Delta t} =\Delta_h ( \sigma_{w} {\rm ln}(S_{w, N}^{k+1} )  - \sigma_{n} {\rm ln}(1-S_{w, N}^{k+1} )+ \sigma_{wn} (1- 2 S_{w, N}^k ) ) + \tau_0^k ,  \label{consistency-2} 
\end{equation} 
with $\tau_0^k = \tau_w^k - \tau_n^k$, $\| \tau_0^k \|_{-1, h} \le C (\Delta t + h^2)$. Notice that the saturation constraint identity, $S_{w, N} + S_{n, N} \equiv 1$, has been repeatedly applied in the derivation. Meanwhile, in terms of the numerical solution, a difference between \eqref{first_scheme1} and \eqref{first_scheme2} leads to a similar equation: 
\begin{equation}
	\frac{2 ( S_{w, h}^{k+1}-S_{w, h}^k)}{\Delta t} =\Delta_h ( \sigma_{w} {\rm ln}(S_{w, h}^{k+1} )  - \sigma_{n} {\rm ln}(1-S_{w, h}^{k+1} )+ \sigma_{wn} (1- 2 S_{w, h}^k ) ) ,  \label{consistency-3} 
\end{equation} 
with a repeated application of the discrete saturation constraint identity $S_{w, h} + S_{n, h} \equiv 1$. As a result, subtracting~\eqref{consistency-3} from \eqref{consistency-2} gives 
\begin{equation}\label{error equation-1}
	\frac{2 (e_w^{k+1}-e_w^{k})}{\Delta t}= \Delta_h (  \mathcal{G}(S_{w, N}^{k+1})-\mathcal{G}(S_{w,h}^{k+1})- 2\sigma_{wn} e_w^k )+ \tau_0^k ,
\end{equation}
where $\mathcal{G}(x)=\sigma_{w}{\rm ln}(x)-\sigma_{n}{\rm \ln}(1-x)$. 

Taking a discrete inner product with \eqref{error equation-1} by $(-\Delta_h)^{-1} e_w^{k+1}$ leads to 
\begin{equation} 
	\begin{aligned} 
		& 
		\frac{2}{\Delta t} \langle e_w^{k+1}-e_w^k, e_w^{k+1} \rangle_{-1, h} 
		+ ( \mathcal{G}(S_w^{k+1}) - \mathcal{G}(S_{w,h}^{k+1}) , e_w^{k+1} )_h 
		\\
		=  & 
		2 \sigma_{wn} ( e_w^k  , e_w^{k+1} )_h 
		+  \langle \tau_0^k , e_w^{k+1} \rangle_{-1, h} , 
	\end{aligned} 
	\label{convergence-1} 
\end{equation}
in which the summation-by-parts formulas have been repeatedly applied. The estimates for the temporal differentiation, the concave and the local truncation terms are straightforward: 
\begin{align} 
	& 
	2  \langle e_w^{k+1}-e_w^k, e_w^{k+1} \rangle_{-1, h}  
	= \| e_w^{k+1} \|_{-1, h}^2 - \| e_w^k \|_{-1, h}^2  + \| e_w^{k+1}-e_w^k \|_{-1, h}^2 , 
	\label{convergence-2} 
	\\
	& 
	2 \sigma_{wn} ( e_w^k  , e_w^{k+1} )_h  
	\le \sigma_{wn} ( \| e_w^k  \|_h^2 + \| e_w^{k+1} \|_h^2 )  ,  \label{convergence-3}  
	\\
	&
	\langle \tau_0^k , e_w^{k+1} \rangle_{-1, h} 
	\le \frac12 ( \| \tau_0^k \|_{-1, h}^2 + \| e_w^{k+1} \|_{-1, h}^2) . \label{convergence-4} 
\end{align} 
In terms of the nonlinear error estimate, an application of Lagrange mean value theorem implies that  
\begin{equation}
	\mathcal{G}(S_{w, N}^{k+1})-\mathcal{G}(S_{w,h}^{k+1})=\mathcal{G}'(\xi^{k+1} )e_w^{k+1} ,  
	\quad \mathcal{G}'(\xi^{k+1} ) = \frac{\sigma_{w}}{\xi^{k+1}}+\frac{\sigma_{n}}{1-\xi^{k+1}}  ,  
	\label{convergence-5-1} 
\end{equation}
in which $\xi^{k+1}$ is between $S_{w, N}^{k+1}$ and $S_{w,h}^{k+1}$, at a point-wise level. Meanwhile, by the bound-preserving property for both the exact and numerical solutions, we see that $0 < \xi^{k+1} < 1$. Moreover, a Calculus-style analysis reveals a lower bound for the nonlinear coefficient: 
\begin{align} 
	\mathcal{G}'(\xi^{k+1} ) &= \frac{\sigma_{w}}{\xi^{k+1}}+\frac{\sigma_{n}}{1-\xi^{k+1}}\nonumber\\
	&\ge C_{min}=\frac{\sigma_{w}(\sigma_{w}-\sigma_{n})}{\sigma_{w}-\sqrt{\sigma_{w}\sigma_{n}}} + \frac{\sigma_{n}(\sigma_{w}-\sigma_{n})}{-\sigma_{n}+\sqrt{\sigma_{w}\sigma_{n}}}=(\sqrt{\sigma_{w}}+\sqrt{\sigma_{n}})^2 . 
	\label{convergence-5-2} 
\end{align}
In turn, the following estimate becomes available: 
\begin{equation} 
	( \mathcal{G}(S_{w, N}^{k+1}) - \mathcal{G}(S_{w,h}^{k+1}) , e_w^{k+1} )_h  
	= (  \mathcal{G}'(\xi^{k+1} ) e_w^{k+1} ,  e_w^{k+1} )_h 
	\ge C_{min}  \|  e_w^{k+1} \|_h^2 .  \label{convergence-5-3} 
\end{equation}
Subsequently, a substitution of~\eqref{convergence-2}-\eqref{convergence-4} and \eqref{convergence-5-3} into \eqref{convergence-1} results in 
\begin{equation} 
	\begin{aligned} 
		& 
		\frac{1}{\Delta t} ( \| e_w^{k+1} \|_{-1, h}^2 - \| e_w^k \|_{-1, h}^2 ) 
		+ C_{min}  \|  e_w^{k+1} \|_h^2 
		\\
		\le & 
		\sigma_{wn} ( \| e_w^k  \|_h^2 + \| e_w^{k+1} \|_h^2 ) 
		+ \frac12 ( \| \tau_0^k \|_{-1, h}^2 + \| e_w^{k+1} \|_{-1, h}^2 ) . 
	\end{aligned} 
	\label{convergence-6} 
\end{equation}

A summation of~\eqref{convergence-6} over time leads to 
\begin{equation} 
	\begin{aligned} 
		& 
		\| e_w^{m+1} \|_{-1, h}^2 + \gamma_0 \Delta t \sum_{k=1}^{m+1} \|  e_w^k \|_h^2 
		\le \frac{\Delta t}{2} \sum_{k=1}^{m+1} \| e_w^k \|_{-1, h}^2 
		+ \frac{\Delta t}{2}  \sum_{k=0}^{m} \| \tau_0^k \|_{-1, h}^2 ,  
	\end{aligned} 
	\label{convergence-9} 
\end{equation} 
in which the identity $\gamma_0 = C_{min} - 2 \sigma_{wn}$ has been used. Finally, an application of discrete Gronwall inequality yields the desired convergence estimate~\eqref{error1}, for $\alpha =w$. The analysis for $\alpha =n$ is equivalent, since $e_n^k = - e_w^k$, for any $k \ge 0$. This finishes the proof of Theorem~\ref{therror}. 
\end{proof} 

\begin{rem} 
In the PDE system~\eqref{e_modela}-\eqref{e_modelc}, the pressure $p$ plays a key role of a Lagrange multiplier to enforce the saturation constraint $S_w + S_n \equiv 1$. In the numerical system~\eqref{first_scheme1}-\eqref{first_scheme5}, the pressure variable plays a similar role. Because of its Lagrange multiplier nature, both its regularity estimate in the PDE analysis and the numerical error estimate turn out to be a highly challenging issue. To overcome the difficulty associated with the error estimate for the Lagrange multiplier, we take the difference between the evolutionary equations for the two saturation variables, in both the consistency analysis~\eqref{consistency-2} (applied to the projection solution) and the numerical system~\eqref{consistency-3}. In turn, the pressure variable has been cancelled by taking a difference, and the subsequent error estimates turn out to be more straightforward. This approach is the key point in the convergence analysis. 
\end{rem}

\begin{rem} 
The nonlinear error estimates~\eqref{convergence-5-1}-\eqref{convergence-5-3} come from the convexity of the logarithmic terms in the free energy expansion. Such a convexity ensures a lower bound of the diffusion error inner product in \eqref{convergence-5-3}, based on a point-wise lower bound~\eqref{convergence-5-2}. However, if an $\ell^\infty(0, T; \ell^2) \cap \ell^2 (0, T; H_h^1)$ error estimate is performed, the point-wise lower bound~\eqref{convergence-5-2} is not sufficient to guarantee a lower bound for the associated gradient error estimate analogous of~\eqref{convergence-5-3}, since the gradient estimate for ${\mathcal{G}}' (\xi^{k+1})$ is highly challenging. In fact, to accomplish such a gradient estimate, a combination of a rough and a refined error estimates is needed to establish a phase separation property for the saturation variables, i.e., a uniform distance between the physical variables and the singular limit values of 0 and 1; see the related work of convergence analysis for the Poisson-Nernst-Planck (PNP) system~\cite{liu2021positivity}. For simplicity of presentation, we only provide the $\ell^\infty(0, T; H_h^{-1}) \cap \ell^2 (0, T; \ell^2)$ error estimate in this article, and the $\ell^\infty(0, T; \ell^2) \cap \ell^2 (0, T; H_h^1)$ error estimate will be left in the future works. 
\end{rem}

\begin{rem} 
Similarly, if a non-constant mobility function is considered, the convergence analysis becomes much more complicated, since a direct application of the $\ell^\infty(0, T; H_h^{-1}) \cap \ell^2 (0, T; \ell^2)$ error estimate fails. Instead, an $\ell^\infty(0, T; \ell^2) \cap \ell^2 (0, T; H_h^1)$ error estimate is necessary to theoretically establish the convergence result, and this error estimate has to be based on a combination of a rough and a refined error estimates, such as the one reported in \cite{liu2021positivity} for the PNP system. The convergence analysis for the case of a non-constant mobility function will also be left in the future works, for the sake of brevity of this article. 
\end{rem} 

\section{Numerical experiments}\label{section6}
In this section, we provide some numerical experiments to verify the accuracy and efficiency of the proposed first-order scheme \eqref{refirst_w}-\eqref{refirst_n} and second-order scheme \eqref{resecond_w}-\eqref{resecond_n}. In terms of the energy parameters, we take $\sigma_{w}=\frac{{\overline{\sigma}_{w}}}{\sqrt{K}},~\sigma_{n}=\frac{{\overline{\sigma}_{n}}}{\sqrt{K}},~\sigma_{wn}=\frac{{\overline{\sigma}_{wn}}}{\sqrt{K}}$. The convergence criterion for the Newton iteration is chosen as 
$$ \left \| S_{w,h}^{k+1,L+1}-S_{w,h}^{k+1,L}  \right \|_{\infty} < \epsilon,$$		
where $\epsilon=10^{-6}$ is the error tolerance and $L$ is the iteration count. Meanwhile, the maximum number of iterations is set to be 100 for all examples.

\subsection{ Example 1}\label{example1}  
In this example, we test the accuracy of the numerical scheme. The computational domain is chosen as $\Omega= (0, 1m)^2$. The porosity and permeability are uniformly distributed in space, with $\phi= 0.8$  and $\text{K}=0.1 m^2$. The energy parameters are set as $\overline{\sigma}_{w}=0.58 \ pa\cdot m $, $\overline{\sigma}_{n}=0.05 \ pa\cdot m$, $\overline{\sigma}_{wn}=0.36 \ pa\cdot m$. We take the viscosities as $\eta_{w}=1 \ pa\cdot s $, $\eta_{n}=0.5 \ pa \cdot s$, and the parameter $m=2$. The exact solutions for the wetting-phase saturation and the pressure are given by
$$ S_w=e^{-t}(\frac{1}{4}x^2 (1-x)^2 y^2 (1-y)^2+\frac{1}{2}),~~ p=e^{-t}(\frac{1}{2}cos(\pi x)cos(\pi y)). $$

\begin{table}[!t]  
\renewcommand{\arraystretch}{1.1}
\small
\centering
{\caption{The numerical errors and convergence rates for the first-order scheme at $T =1s$ with $\tau=h^2$.}	\label{table1_example1}}
\begin{tabular}{ccccc}\hline
	$\tau$    &$ {\color{black}\left \| e_{w}  \right \|_{\ell^{2}(0,T;\ell^2(\Omega))  }  } $    &Rate  &$\left \| e_{p} \right \|_{\ell^{2}(0,T;\ell^2(\Omega))  } $   &Rate  \\ \hline
	$1/16$      &0.0082      & ---    &0.0220     & --- \\
	$1/64$    &0.0021       &0.9965   &0.0055    &1.0039\\
	$1/256$   &5.1569e-04   &0.9989   &0.0014    &1.0008\\
	$1/1024$   &1.2897e-04   &0.9997   &3.4206e-04  &1.0001\\
	$1/4096$   &3.2242e-05  &1.0000   &8.5553e-05  &0.9997\\
	\hline
\end{tabular}
\end{table}
\begin{table}[!t]  
\renewcommand{\arraystretch}{1.1}
\small
\centering
{\caption{The numerical errors and convergence rates for the second-order scheme at $T =1s$ with $\tau=\frac{1}{2}h$.}\label{table2_example1}}
\begin{tabular}{ccccc}\hline
	$h$  &$\left \| e_{w} \right \|_{\ell^{2}(0,T;\ell^2(\Omega))  } $    &Rate  &$\left \| e_{p} \right \|_{\ell^{2}(0,T;\ell^2(\Omega))  } $    &Rate   \\ \hline
	$1/8$    &8.9136e-05    & ---     &0.0039     & --- \\
	$1/16$   &2.4753e-05   &1.8484   &9.6942e-04  &2.0179 \\
	$1/32$   &6.5328e-06    &1.9218  &2.4108e-04    &2.0076\\
	$1/64$   &1.6785e-06    &1.9606  &6.0065e-05    &2.0049 \\
	$1/90$   &8.5645e-07    &1.9736  &3.0315e-05    &2.0057 \\
	\hline
\end{tabular}
\end{table}
We calculate the error between the numerical and the exact solutions, and present the results at $T=1s$ in Tables \ref{table1_example1} and \ref{table2_example1}, respectively. It is observed that the proposed schemes achieve the first- and second-order convergence rates in time for both $S_w$ and $p$, which are consistent with the theoretical analysis in Theorem \ref{therror}.

Moreover, we use different analytical solutions to verify the convergence rates. The porosity is uniformly distributed in space with $\phi=0.9$, and the remaining parameters are consistent with the above ones. Here the initial conditions and the right-hand side of the equation are computed based on the analytic solutions provided below
$$ 	S_w=e^{-t}(\frac{1}{10}cos(\pi x) cos(\pi y)+\frac{1}{2}),~~p=e^{-t}(\frac{1}{2}cos(\pi x)cos(\pi y)).  $$
We present the numerical results for the first- and second-order schemes at the final time $T = 1s$, using time step sizes $\tau=h^2$ and $\tau=h$, respectively, in Tables \ref{table1_example2} and \ref{table2_example12}. It is clear that the results are consistent with the theoretical results in Theorem \ref{therror}.
\begin{table}[!t]  
\renewcommand{\arraystretch}{1.1}
\small
\centering
{		\caption{ The numerical errors and convergence rates for the first-order scheme at $T =1s$ with $\tau=h^2$.}	\label{table1_example2}}
\begin{tabular}{ccccc}\hline
	$\tau$    &$\left \| e_{w} \right \|_{\ell^{2}(0,T;\ell^2(\Omega))  } $    &Rate  &$\left \| e_{p} \right \|_{\ell^{2}(0,T;\ell^2(\Omega))  } $    &Rate  \\ \hline
	$1/16$      &0.0083     & ---     &0.0229     & --- \\
	$1/64$    &0.0021       &0.9948   &0.0057       &1.0053\\
	$1/256$   &5.2428e-04   &0.9985  &0.0014    &1.0012\\
	$1/1024$   &1.3114e-04   &0.9996   &3.5374e-04  &1.0002\\
	$1/4096$   &3.2785e-05  &1.0000   &8.8474e-05  &0.9997\\
	\hline
\end{tabular}
\end{table}
\begin{table}[!t]  
\renewcommand{\arraystretch}{1.1}
\small
\centering
\vspace{-3pt}
{	\caption{The numerical errors and convergence rates  for the second-order scheme  at $T =1s$ with $\tau=h$.}\label{table2_example12}}
\begin{tabular}{ccccc}\hline
	$h$  &$\left \| e_{w} \right \|_{\ell^{2}(0,T;\ell^2(\Omega))  } $    &Rate  &$\left \| e_{p} \right \|_{\ell^{2}(0,T;\ell^2(\Omega))  } $    &Rate   \\ \hline
	$1/8$    &8.6681e-04    & ---     &0.0036     & --- \\
	$1/16$   &2.3214e-04   &1.9007   &8.9288e-04  &2.0196 \\
	$1/32$   &6.0010e-05    &1.9517  &2.2104e-04    &2.0141\\
	$1/64$   &1.5258e-05    &1.9756  &5.4862e-05    &2.0104 \\
	$1/90$   &7.7540e-06    &1.9855  &2.7712e-05    &2.0033 \\
	\hline
\end{tabular}
\vspace{-3pt}
\end{table}
\subsection{Example 2}\label{example2} 
We initially focus on a benchmark problem from \cite{kou2022energy} and consider a closed region with $\Omega=[0, 100m]^2$. For the closed system, we use the no-flow boundary condition, i.e., $\textbf{u} \cdot \textbf{n}=0$ on the entire boundary $\partial \Omega$, where \textbf{n} denotes the outward unit normal vector to $\partial \Omega$. The initial distributions of porosity, permeability and wetting-phase saturation are illustrated in Figure \ref{exp2_initaial}. 
The viscosities are taken as $\eta_{w}=0.9\ cp$ and $\eta_{n}=0.1\ cp$. The energy parameters are given by $\sigma_{w}= 11.655 \ pa, \sigma_{n}= 1.0796 \ pa, \sigma_{wn}= 7.424 \ pa$ for the lower permeable region, and $\sigma_{w}= 5.8275 \ pa, \sigma_{n}= 0.5398 \ pa, \sigma_{wn}= 3.712 \ pa$ for the higher permeable region and the parameter $m=3$. A uniform $50 \times 50$ mesh is used, and the time step size is chosen as $\tau= 0.5\ day$.

\begin{figure}[!t]
\centering
\includegraphics[scale=0.30]{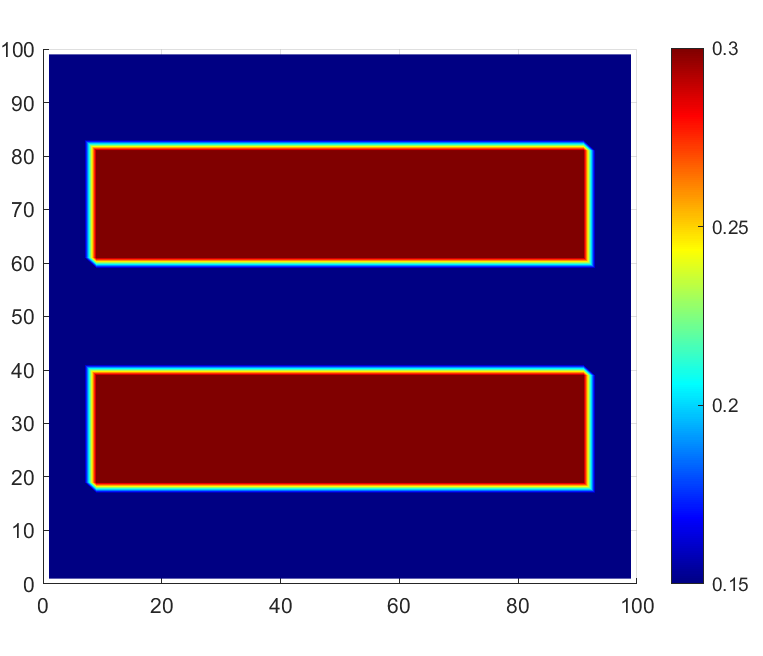}
\includegraphics[scale=0.30]{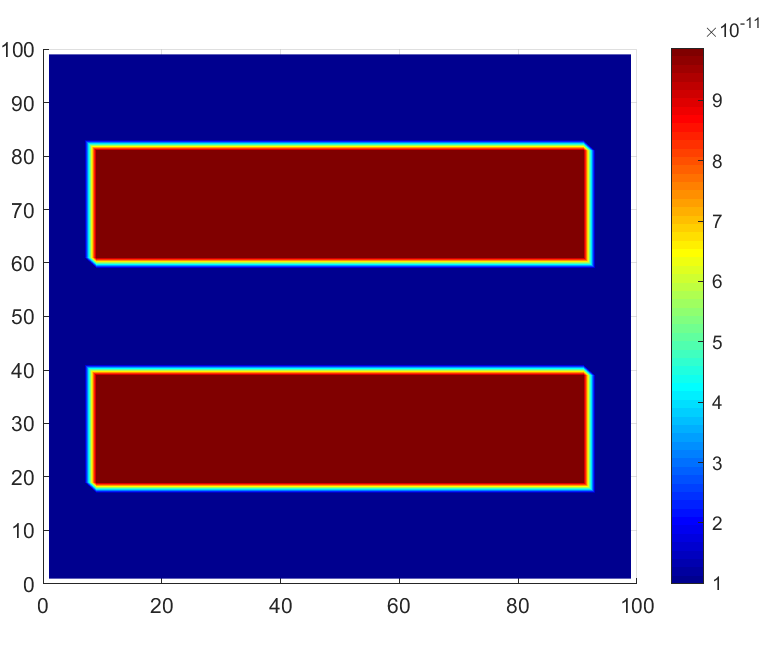}
\includegraphics[scale=0.30]{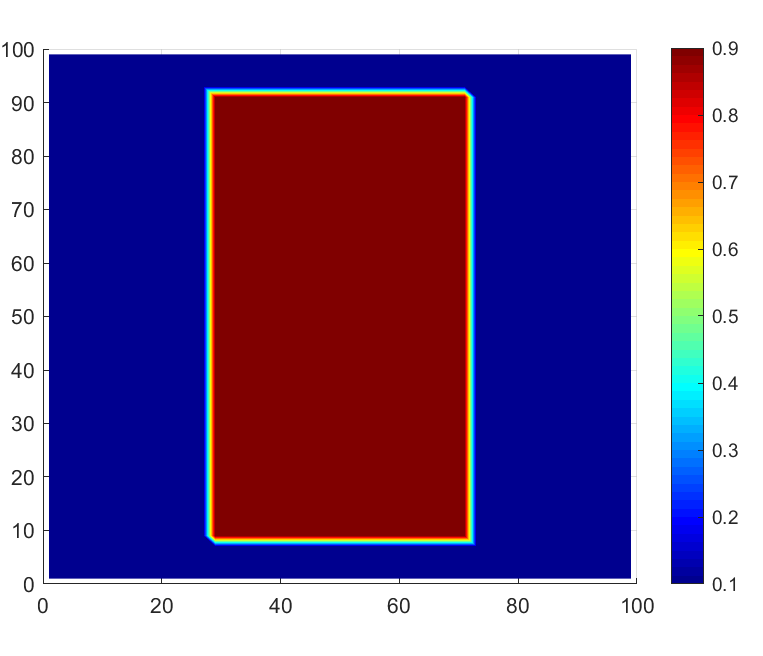}
\caption{The initial distributions of porosity(left), permeability(center) and wetting-phase saturation(right) {\color{black} in a regular domain}. }
\label{exp2_initaial}
\end{figure}

In Figure \ref{exp2_sw}, we plot the wetting-phase saturation contours at different time instants. It is clear that the wetting-phase fluid moves from areas with high permeability to the ones with low permeability. It is observed from Figure \ref{exp2_mu} that the distribution of chemical potential affects that of saturation, which indicates that chemical potential has a significant influence on the saturation distribution.  We also depict the minimum and maximum values for the wetting-phase saturation, original energy profiles and the evolution of total mass for both phases in Figure \ref{exp2_bound}, which are consistent with the theoretical analysis.


\begin{figure}[!t]
\centering
\includegraphics[scale=0.30]{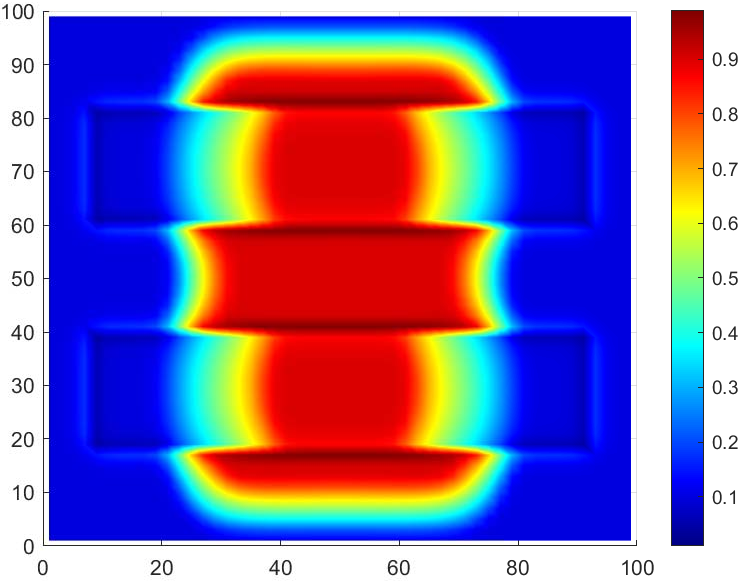} 
\includegraphics[scale=0.30]{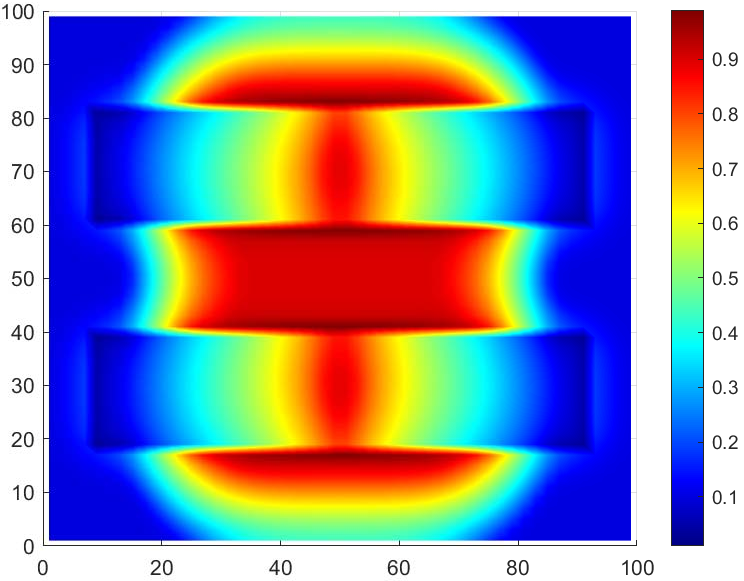}
\includegraphics[scale=0.30]{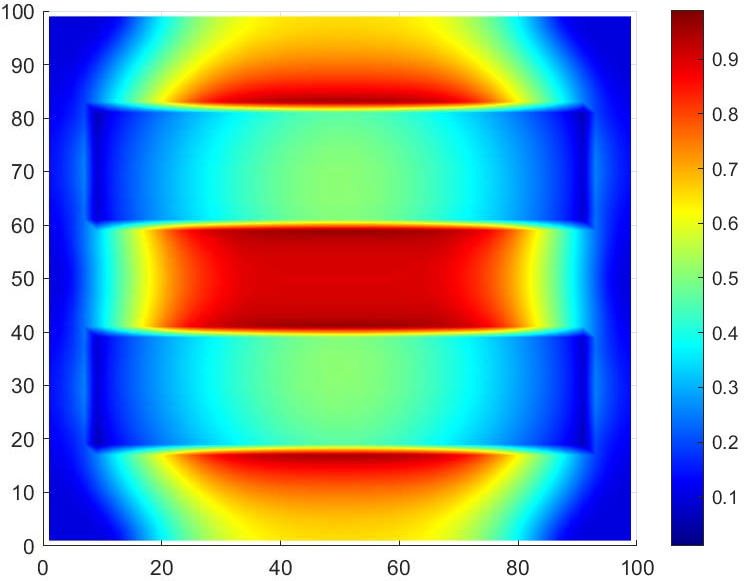}
\caption{The distributions of wetting-phase saturation at 4.11 year, 13.70 year, 34.25 year in 50 $\times$ 50 mesh {\color{black} in a regular domain}.}
\label{exp2_sw}
\end{figure}
\begin{figure}[!t]
\centering
\includegraphics[scale=0.30]{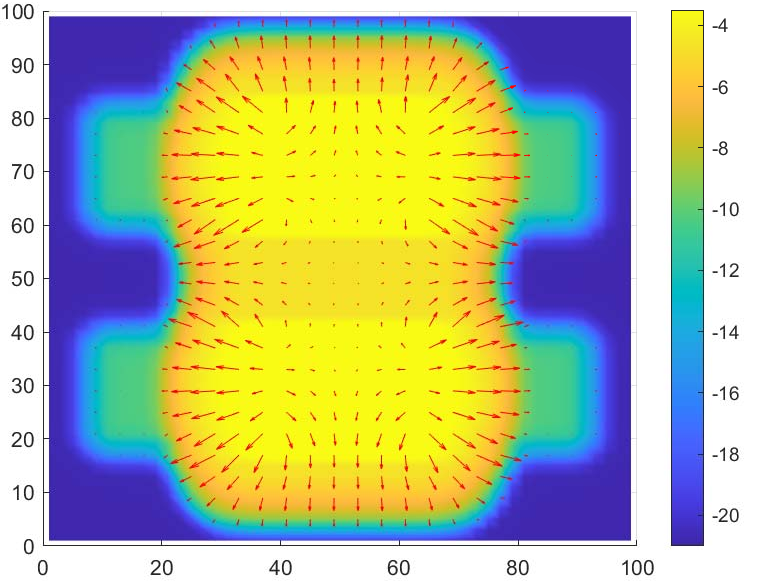}
\includegraphics[scale=0.30]{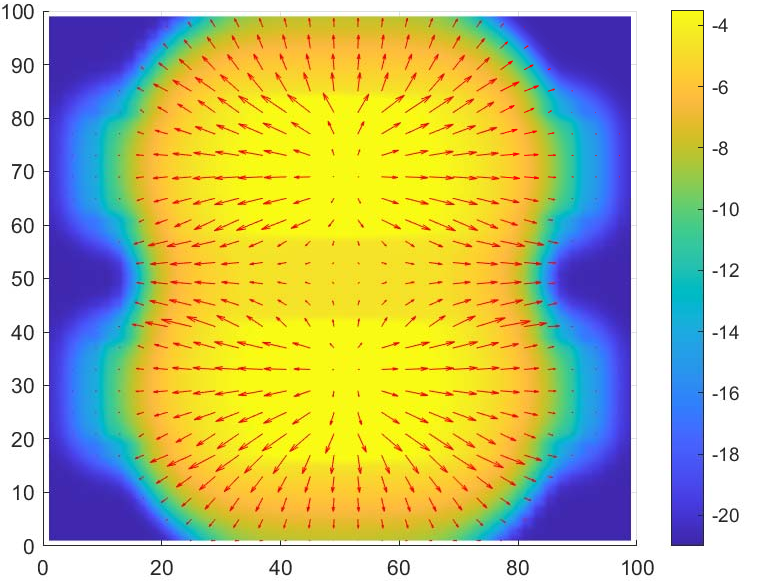}
\includegraphics[scale=0.30]{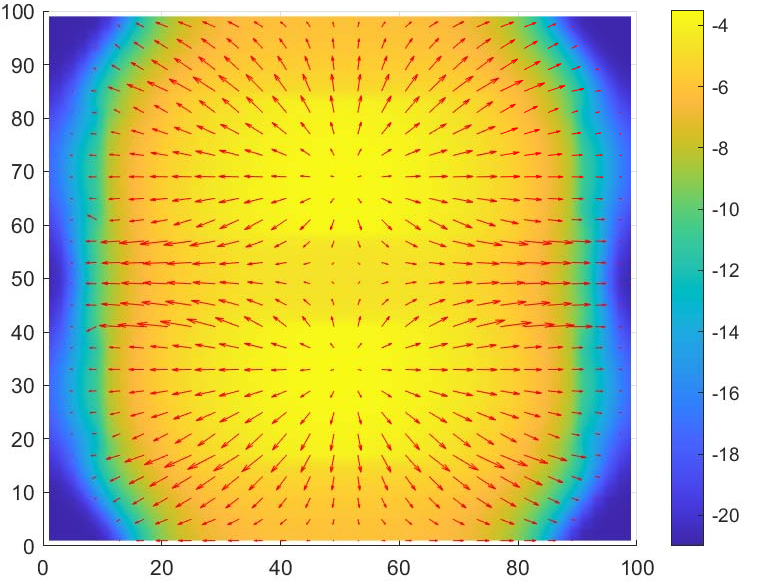}
\caption{The distributions of chemical potential at 4.11 year, 13.70 year, 34.25 year in 50 $\times$ 50 mesh {\color{black} in a regular domain}. }
\label{exp2_mu}
\end{figure}
\begin{figure}[htp]
\centering
\includegraphics[scale=0.30]{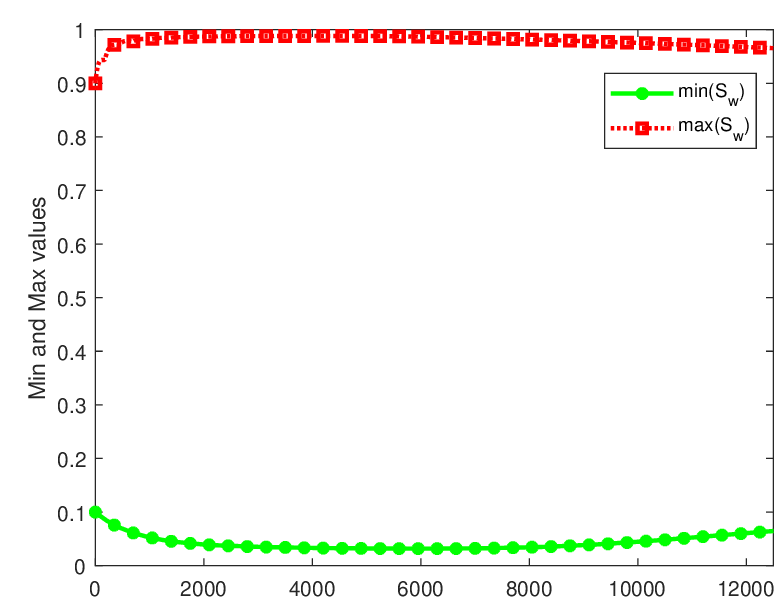}
\includegraphics[scale=0.30]{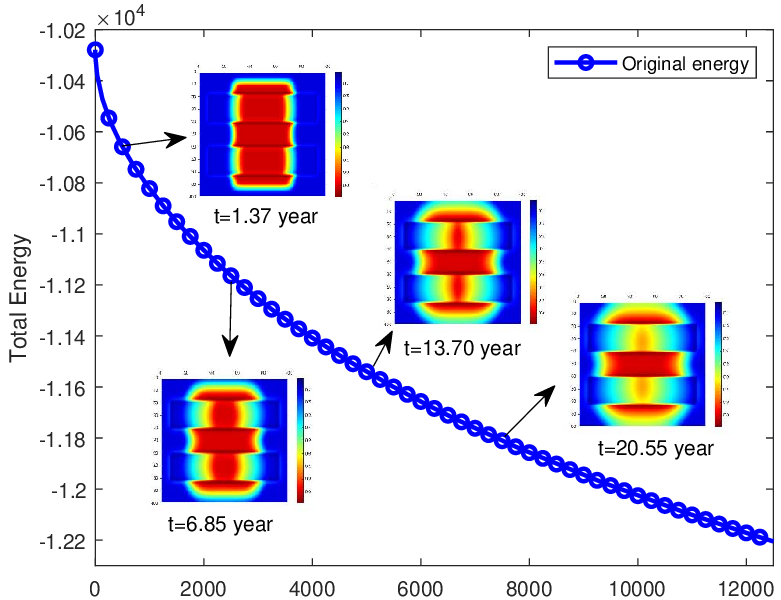}
\includegraphics[scale=0.30]{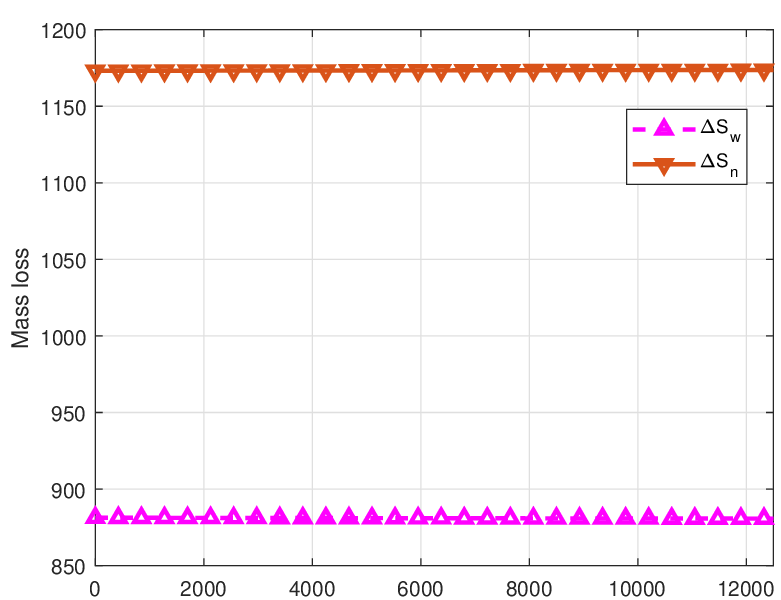}
\caption{(left) Bound of two phase saturations up to the final time step. {\color{black}(center) Original energy evolution curves up to the final time step. }(right) Mass conservation up to the final time step {\color{black} in a regular domain}. }
\label{exp2_bound}
\end{figure}


{\color{black} Furthermore, the proposed numerical schemes are applicable to geometries beyond regular domains.  We present the initial distributions of porosity, permeability and wetting-phase saturation in an L-shaped domain, and the distributions of wetting-phase saturation at different time instants are displayed in Figures \ref{exp2_ini_shape} and \ref{exp2_lshape_sw} respectively. It is clearly observed that the wetting-phase fluid is flowing into the regions with lower porosity and permeability, similar to the case with a regular rectangular domain.}

\begin{figure}[!t]
\centering
\includegraphics[scale=0.28]{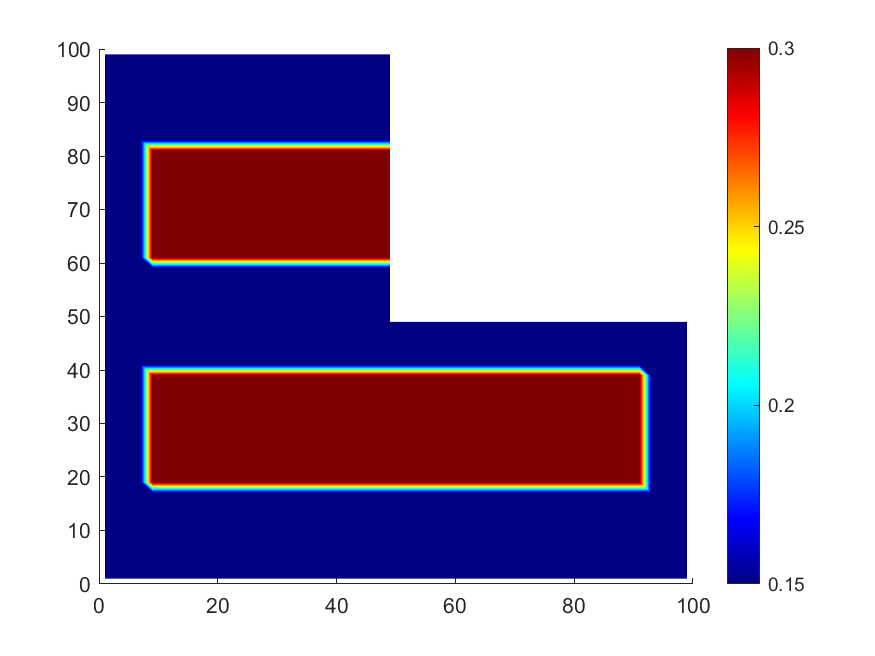} 
\includegraphics[scale=0.28]{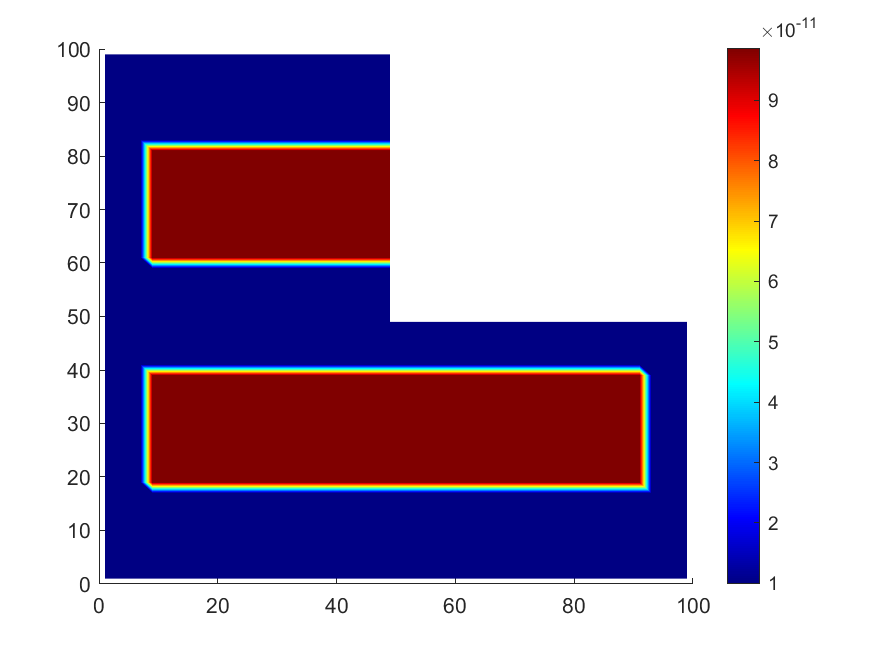}
\includegraphics[scale=0.28]{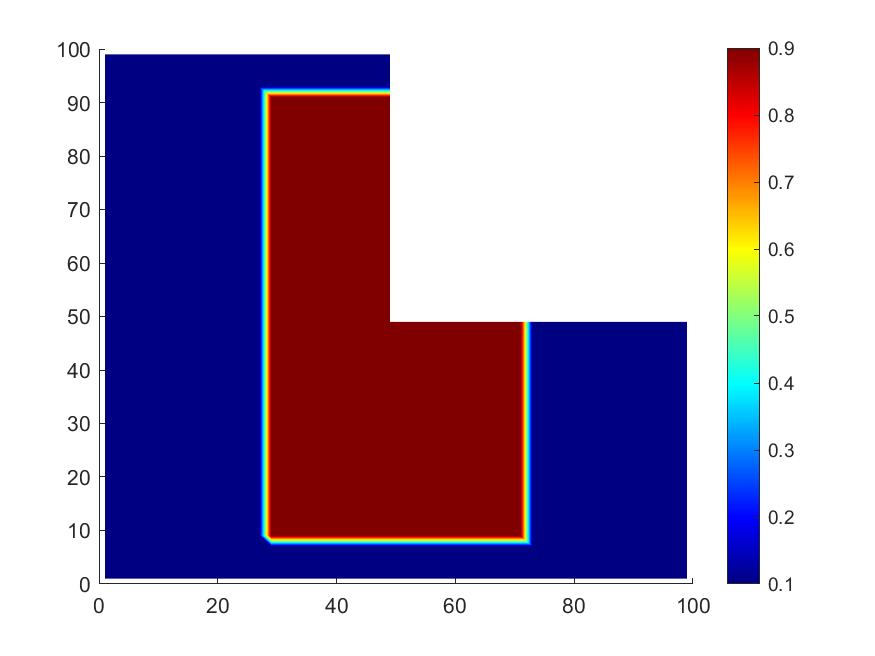}
\caption{The initial distributions of porosity(left), permeability(center) and wetting-phase saturation(right) {\color{black} in L-shaped domain}. }
\label{exp2_ini_shape}
\end{figure}

\begin{figure}[!t]
\centering
\includegraphics[scale=0.28]{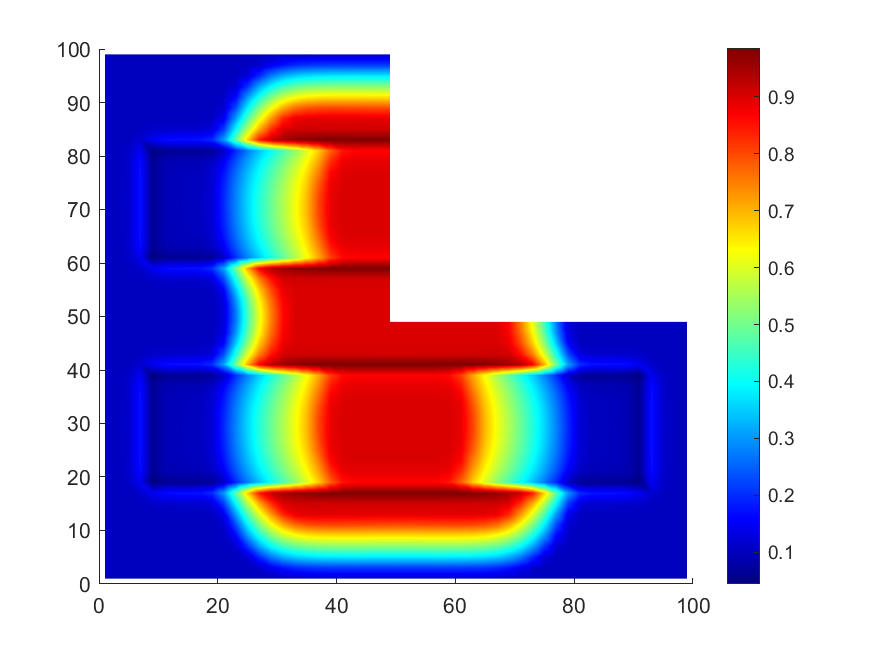} 
\includegraphics[scale=0.28]{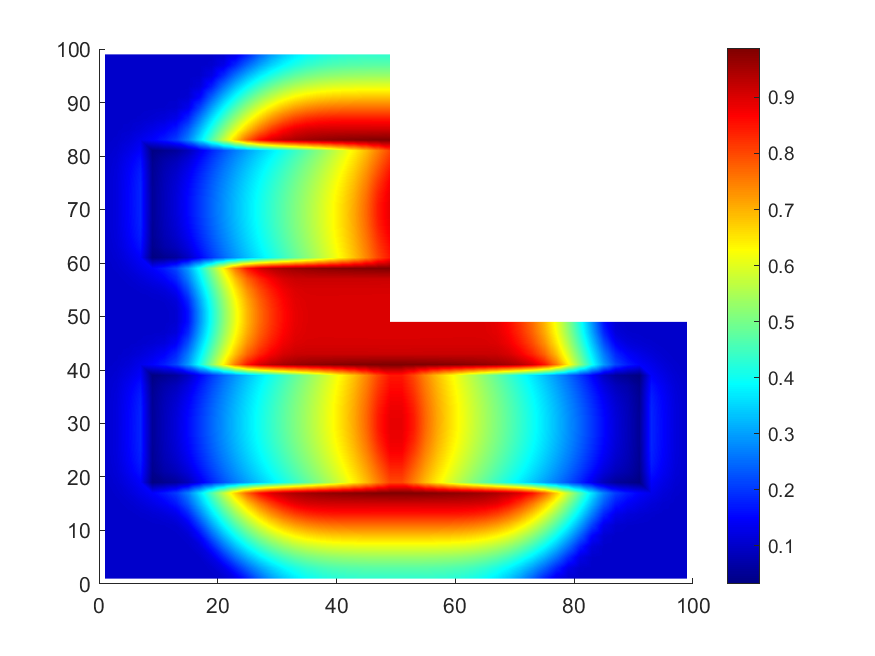}
\includegraphics[scale=0.28]{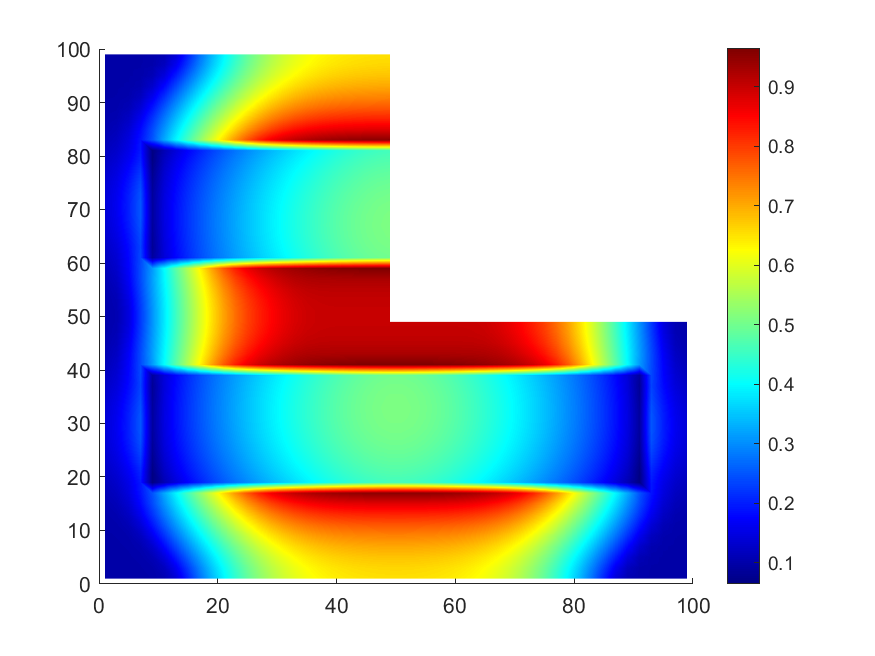}
\caption{The distributions of wetting-phase saturation at 4.11 year, 13.70 year, 34.25 year in 50 $\times$ 50 mesh {\color{black} in L-shaped domain}.}
\label{exp2_lshape_sw}
\end{figure}


\subsection{Example 3}\label{example3} 
In this example, we simulate the three-dimensional displacement process where one fluid progressively replaces another, by choosing the simulation domain as $\Omega=[0,L]^3$ with 8000 mesh cells, where $L=100m$. At the initial time, we take the wetting-phase saturation as $S_w^0=0.01$ and $\phi=0.2$. 
{\color{black} There exists a subdomain $\Omega_h$ as follows:
$$\Omega_h=\left \{ \textbf{x}=(x,y,z): 0 \le x \le  L,~ 0.2L \le y \le  0.4L  ~\text{and}~  0.6L \le y \le  0.8L , ~ 0 \le z \le  L\right \}, $$
which has a higher permeability $K = 100d$, while the permeability in the remaining domain is $K = 1d$.} We take the viscosity parameters as $\eta_{w}=1 \ cp,\eta_{n}=0.75 \ cp$. The energy parameters are given by $\sigma_{w}=2.331 \ pa$, $\sigma_{n}=0.2159 \ pa$, $\sigma_{wn}=1.4848 \ pa$ for the higher permeable layers, and $\sigma_{w}=11. 655\ pa$, $\sigma_{n}=1.0796\ pa$, $\sigma_{wn}=7.424\ pa$ for the lower permeable layers. The parameter $m$ is set to be $m=3$. The wetting phase fluid is injected from the left boundary of the medium to replace the non-wetting phase fluid flowing out at right boundary of the medium. The injection velocity is given by $\boldsymbol{u}_w=0.7\ m/year$.  There is no mass flux on the other boundaries. The pressure $p=1\ bar$ and $\nabla \mu_{\alpha} \cdot \textbf{n}=0$ are imposed on the right boundary, where \textbf{n} is the outward unit normal vector to the right boundary. The time step size is taken to be $\tau=0.8\ day$.

The distributions of wetting-phase saturation and wetting-phase rate at different time instants are depicted in Figures \ref{exp3_sw} and \ref{exp3_u} respectively. It is clearly observed that more wetting-phase fluid flows into high permeability areas, as the chemical potential gradient is a primary driving force for this process. 
\begin{figure}[!t]
\centering
\includegraphics[scale=0.38]{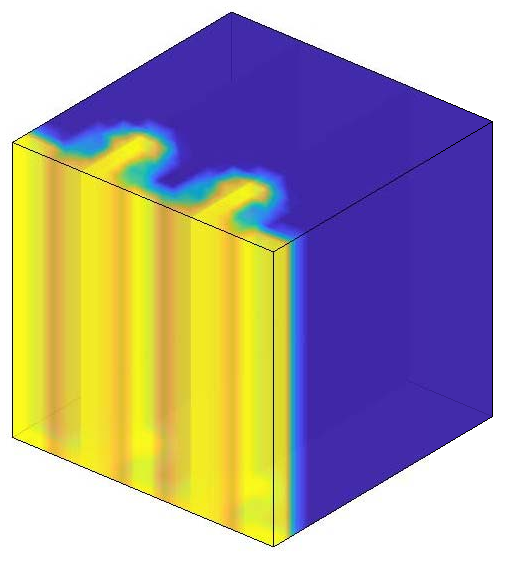}\quad
\includegraphics[scale=0.38]{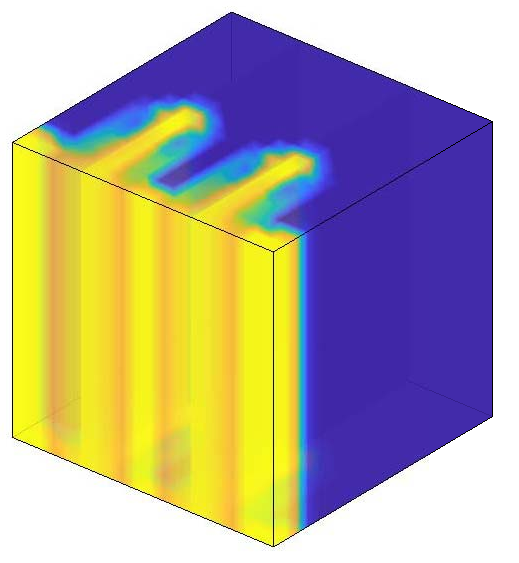}\quad
\includegraphics[scale=0.38]{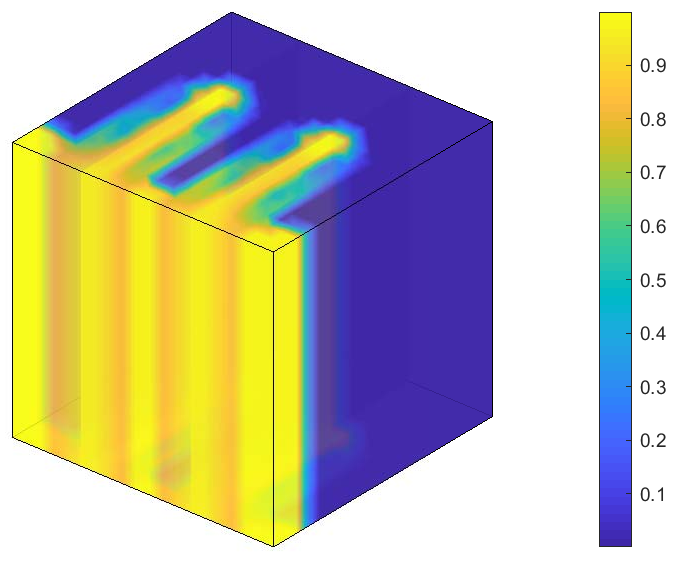}
\caption{The distributions of wetting-phase saturation at 4.59 year, 6.96 year, 8.97 year.}
\label{exp3_sw}
\end{figure}


\begin{figure}[t]
\centering
\includegraphics[scale=0.38]{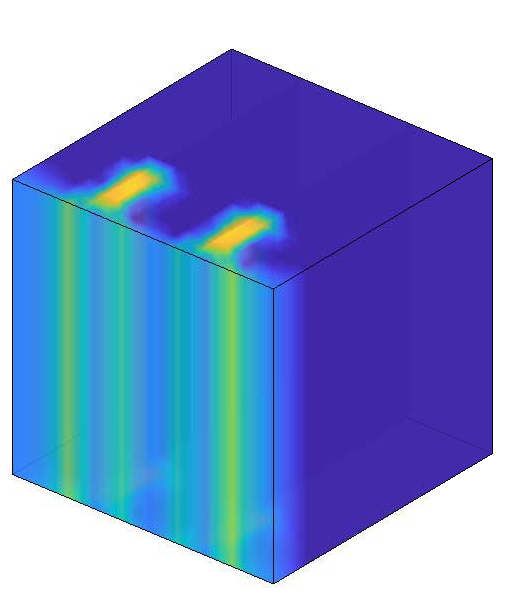}\quad
\includegraphics[scale=0.38]{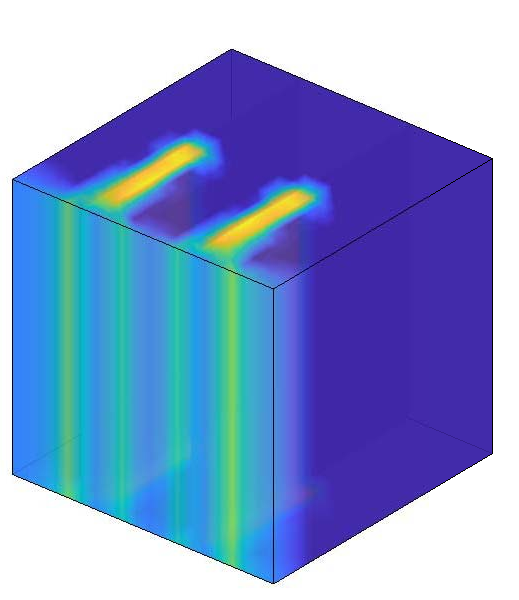}\quad
\includegraphics[scale=0.38]{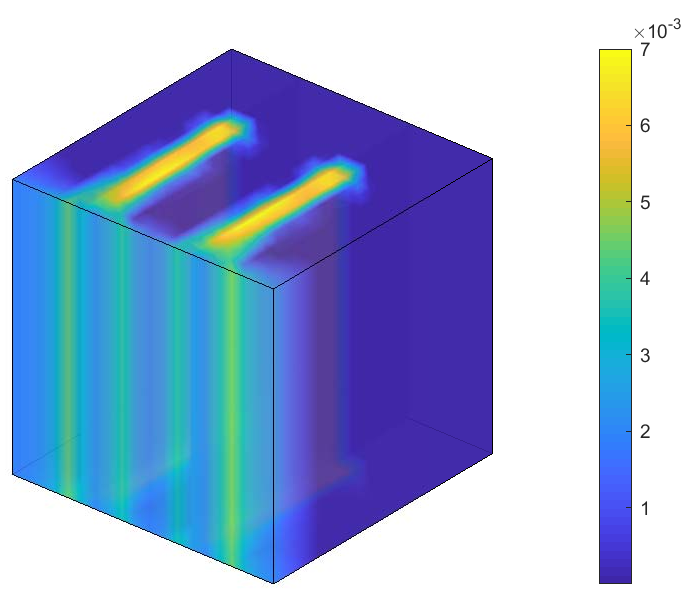}
\caption{The distributions of wetting-phase rate at 4.59 year, 6.96 year, 8.97 year.}
\label{exp3_u}
\end{figure}
\section{Conclusions}\label{section7}
In this work, we propose two efficient first- and second-order in time, fully decoupled, structure-preserving numerical schemes for two phase flow in the porous media. The unique solvability, bound-preserving property and original energy dissipation are rigorously proved. Furthermore, we establish an error estimate for the first-order scheme, which gives an optimal convergence rate for saturations of two phases.  A promising direction for future research involves an extension of these methods to more complex and compressible models, where developing second-order, bound-preserving, and energy stable numerical schemes remains a substantial challenge.

	\bibliographystyle{siamplain}
	\bibliography{two_phase}
\end{document}